\newcommand{\LV}{\left|}
\newcommand{\RV}{\right|}
\newcommand{\LN}{\left\|}
\newcommand{\RN}{\right\|}
\newcommand{\LB}{\left[}
\newcommand{\RB}{\right]}
\newcommand{\LC}{\left(}
\newcommand{\RC}{\right)}
\newcommand{\LA}{\left<}
\newcommand{\RA}{\right>}
\newcommand{\be}{\begin{equation}}
\newcommand{\ee}{\end{equation}}
\newcommand{\bes}{\begin{equation*}}
\newcommand{\ees}{\end{equation*}}
\newcommand{\al}{\alpha}
\newcommand{\N}{\mathcal{N}}
\theoremstyle{plain}
\newtheorem{thm}{Theorem}
\newtheorem{prop}{Proposition}[section]
\newtheorem{lem}[prop]{Lemma}
\theoremstyle{definition}
\newtheorem{rem}{Remark}[section]
\numberwithin{equation}{section}
\def\squarebox#1{\hbox to #1{\hfill\vbox to #1{\vfill}}}
\newcommand{\p}{\partial}
\newcommand{\R}{\mathbb{R}}
\title
[The flow map for the gravity-capillary equations]
{On the regularity of the flow map for the gravity-capillary equations}  
\author[R. M. Chen]
{Robin Ming Chen}
\email{mingchen@pitt.edu}
\author[J.L. Marzuola]
{Jeremy L. Marzuola}
\email{marzuola@math.unc.edu}
\author[D. Spirn]
{Daniel Spirn}
\email{spirn@math.umn.edu}
\author[J. D. Wright]
{J. Doug Wright}
\email{jdoug@math.drexel.edu}
\address{Department of Mathematics, University of Pittsburgh \\
 301 Thackeray Hall, Pittsburgh, PA  15260, USA  }
\address{Department of Mathematics, University of North Carolina \\
    CB \#3250, Phillips Hall, Chapel Hill, NC  27599, USA}
\address{School of Mathematics , University of Minnesota \\
206 Church St. S.E., Minneapolis, MN 55416, USA}
\address{Mathematics Department, Drexel University \\
33rd and Market Street, Philadelphia, PA  19104, USA}
\thanks{Acknowledgements:   R.M.C. was supported in part by NSF grant DMS--0908663.  J.L.M. was supported in part by an NSF Postdoctoral Fellowship at Columbia University and a Hausdorff Center Postdoctoral Fellowship at Universit\"at Bonn.  In addition, J.L.M. thanks the Courant Institute for graciously hosting him as a Visiting Academic during part of this research.  D.S. was supported in part by NSF grants DMS--0707714 and DMS--0955687.  J.D.W. was supported in part by NSF grants DMS--0807738, DMS--0908299 and DMS--1105635. The authors also thank Dave Ambrose, David Lannes and Fernando Reitich for helpful conversations along the way to completing this work.  We also thank the anonymous referee for a careful reading of the draft and for assisting us in putting the results in a better context.  We also thank Thomas Alazard, Nicolas Burq and Claude Zuilly for a careful reading of the draft and catching an error in an early version of the draft.
}
\begin{document}    

\begin{abstract}
We prove via explicitly constructed initial data that solutions to the gravity-capillary wave system in $\mathbb{R}^3$ representing a $2d$ air-water interface immediately fail to be $C^3$ with respect to the initial data if the initial data $(h_0, \psi_0) \in H^{s+\frac12} \otimes H^{s}$ for $s<3$.  Similar results hold in $\mathbb{R}^2$ domains with a $1d$ interface.  Furthermore, we discuss the related threshold for the pure gravity water wave system.
\end{abstract}
   
\maketitle

\section{Introduction}

The motion of a perfect, incompressible and irrotational fluid under the
influence of gravity and surface tension is described by the free surface Euler (or water-waves)
equations. In this paper we consider the fluid domain to be given by $\Omega_t = \{ (x, y, z)\in \R^3: \ z\leq h(t, x, y) \}$, where we have assumed that the the free surface is described by the graph $z = h(t, x, y)$. The incompressibility and irrotationality of the flow indicate the existence of a velocity potential $\phi$, and hence the Euler equation can be reduced to an equation on the free surface (\cite{CraigSchanzSulem1, CraigSchanzSulem2, Zakharov}). More precisely, denoting the trace of the velocity potential $\phi$ on the free surface by $\psi(t, x, y) = \phi(t, x, y, h(t, x, y))$, the equations for $\psi$ and $h$ are
\begin{equation} \label{ww}
\left\{
\begin{split}
\p_t h & = G(h) \psi \\
\p_t \psi & = - g h + \tau \operatorname{div} \LC { \nabla h \over \sqrt{ 1 + \LV \nabla h \RV^2 }} \RC 
- {1\over 2 } \LV \nabla \psi \RV^2 + { \LC G(h) \psi + \nabla h \cdot \nabla \psi \RC^2 \over 2 \LC 1 + \LV \nabla h \RV^2 \RC } , \\
\end{split}\right.
\end{equation}
with
\begin{eqnarray*}
\begin{pmatrix}
h \\
\psi 
\end{pmatrix} (0,x,y)  = \begin{pmatrix}
h_0 \\
\psi_0 
\end{pmatrix} ( x,y) \in \left\{ \begin{array}{ll} H^{s+\frac12} (\mathbb{R}^2) \otimes H^{s} (\mathbb{R}^2)& \hbox{ when } \tau > 0 \\
H^{s-\frac12} (\mathbb{R}^2) \otimes H^{s} (\mathbb{R}^2)& \hbox{ when } \tau = 0 \end{array} \right. , 
\end{eqnarray*}
where the (rescaled) Dirichlet-Neumann operator is given by
\begin{equation*}
G(h)\psi = \sqrt{1+|\nabla h|^2} \p_{\textbf{n}_+} \phi \big|_{z = h(t, x, y)},
\end{equation*} 
and  $\textbf{n}_+$ is the outward unit normal vector to the free surface.

There has been an extensive study of  the well-posedness of the water wave problem. Early works go back to Nalimov \cite{Nalimov}, Yosihara \cite{Yosihara} and Craig \cite{Craig} where $1d$ free surface are considered, surface tension is neglected and the motion of the free surface is a small perturbations of still water. The general local wellposedness of $2d$ full water wave problem was established by Wu \cite{Wu1}, see also Ambrose-Masmoudi \cite{AmbroseMasmoudi1}. 
The large time well-posedness is established in \cite{Wu3}.

In the case of $3d$ gravity water waves, Wu \cite{Wu2} proved it is local wellposedness. Lannes \cite{Lannes} considered the same problem for finite depth. 
The global wellposedness was proved by Germain-Masmoudi-Shatah \cite{GMS} and Wu \cite{Wu4} independently. 

For gravity-capillary waves, Ambrose-Masmoudi \cite{AmbroseMasmoudi2} considered the zero surface tension limit and provided a proof of wellposedness in the $3d$ case. Alazard-Burq-Zuily \cite{ABZ} studied the regularity of the local solutions using a paradifferential formulation in the $2d$ and $3d$ cases.  See also the work of Christianson-Hur-Staffilani \cite{CHS}, where along with the work of Alazard-Burq-Zuily \cite{ABZ}, smoothing and Strichartz estimates were established for the $1d$ interface problem.

When the fluid is rotational and without surface tension, the problem has been shown to be wellposed by Iguchi-Tanaka-Tani \cite{ITT} in $2d$ and Lindblad \cite{Lindblad} and Zhang-Zhang \cite{ZZ} in $3d$. Including surface tension, the wellposedness is proved by Ogawa-Tani \cite{OT1, OT2} in $2d$ and Coutand-Shkoller \cite{CS} and Shatah-Zeng \cite{SZ} in $3d$.

The water wave system \eqref{ww} generates a substantial and rich family of equations that describe solutions in various asymptotic regimes, including KdV, Camassa-Holm, BBM, among others.  These model equations  can accept remarkably low-regularity initial data, and so it is natural to ask whether the original system can also accept low-regularity data.  In this paper, rather than continuing to investigate the wellposedness of water waves, we look at on the onset of breakdown of regularity of the solution operator, or flow map.  It is shown that equations \eqref{ww} 
start failing to behave in a "reasonable" way in rather regular spaces, counter to the behavior of many of the approximate equations listed above.  
 In particular we identify the value of the regularity parameter $s$ for which the model fails to be solvable by standard iterative methods. Our result joins up with the wellposedness results of  Alazard-Burq-Zuily \cite{ABZ}, see  Theorem \ref{illposednesstheorem}.

Unlike the semilinear dispersive system, where in many cases a contraction principle can be applied even in a very low-regularity regime to obtain wellposedness, and consequently the smoothness of the solution map, the water wave problem is by nature quasilinear, suggesting that the initial profile can determine the existence time and the regularity of the solution map rather strongly. On the other hand, we will still use a perturbative approach by considering the nonlinear part of the system as a perturbation of the linear flow and show that the corresponding solution operator experiences a severe singularity in certain Sobolev spaces near the zero solution based upon those nonlinear interactions.

We prove, via explicitly constructed initial data, that the solution map for the gravity-capillary wave system in $\mathbb{R}^3$ (representing a $2d$ air-water interface) immediately fails to be $C^3$ with respect to the initial data if the data $(h_0, \psi_0)$ is in $H^{s+\frac12} \otimes H^{s}$ for $s<3$. This is  primarily due to the influence of surface tension.  
Similar results hold for a $1d$ air-water interface in $\mathbb{R}^2$.  The work, while not addressing fully the issue of ill-posedness of the gravity-capillary problem, follows from prior results related to the study of ill-posedness by Bona-Tzvetkov \cite{BonaTzvetkov}.  The idea stems from looking at the second and third order iterates in a Picard iteration about the chosen initial data, which leads to a {\em high-to-high frequency interaction}: a solution starts off initially with small energy and Fourier transform supported primarily at high frequencies, but quickly generates a large energy at high frequencies.  In contrast to the high-to-low frequency cascade as in \cite{BejenaruTao, ChristCollianderTao1} for the study of ill-posedness in Schr\"odinger equations, where the Sobolev index is negative, our high-high frequency interaction is chosen because of the ill-posedness into Sobolev spaces of positive indices. The initial data will be described in Section \ref{S:ill}, but it is chosen to maximize to the greatest extent possible the frequency interactions in the nonlinear terms.  A $C^3$ ill-posedness result was also obtained in the study of the nonlinear Schr\"odinger equation on the sphere in \cite{BGT1,BGT2}, in which the dispersive estimates are known to have a loss of derivatives due to the lack of dispersion.

We note here that since we are looking at an immediate failure of the solution map to be $C^3$ with respect to the initial data, the effects here are related very much to nonlinear interactions through the Dirichlet-Neumann map and surface tension. 
The reason we go up to $C^3$ for a breakdown of regularity is due to the regularity restriction of the Dirichlet-Neumann map -- $h$ needs to be at least Lipschitz. But the second iteration only provides breakdown of $C^2$ regularity at the threshold at $H^{2-}(\R^2)$.

It is natural to ask if this this breakdown of regularity of the solution map is in fact a failure of uniform continuity of the solution map (a stronger measure of ill-posedness), which has been studied in semilinear dispersive problems, see for instance Bourgain-Pavlovi\'c \cite{BoPa}, Christ-Colliander-Tao \cite{ChristCollianderTao} and Bejenaru-Tao \cite{BejenaruTao}.  However, 
 because of the quasilinear nature, it is very challenging to fit our problem in the framework of \cite{BejenaruTao}, where they provide a general method for $C^0$ ill-posedness of semilinear problems.   Implementation of this framework in \eqref{ww} (and hence proving that our obstruction is indeed $C^0$-ill-posed) may be achievable if the behavior of the Taylor expansion of the nonlinear terms in \eqref{ww} (which we call $\mathcal{N}$, see equation \eqref{eqn:ww2d} below) can be very well understood at all orders.

Proofs of local well-posedness for generic quasilinear equations typically involve paradifferential iteration schemes with dependence on derivatives of the solution in the inhomogeneity of each iteration as well as the iterated linearization of the evolution map.  Hence one should not expect the solution map to be smooth in relation to a particular Sobolev space $H^s$, but rather in terms of scales of those spaces such that the inhomogeneous terms can be smoothly evolved.  Indeed, given a solution $u$ to a quasilinear equation, linearizing about such a solution may involve terms of the form $\partial_x u$, which should live in a less regular Sobolev space.  In addition, the water wave problem is implicitly strongly hyperbolic in nature, meaning that one should not necessarily expect sufficient gain of regularity in the evolution through dispersive methods to make a smooth flow map possible.  Hence, for a full ill-posedness result, one would need to control the flow map at all relevant scales for the equation, which we do not do here.  The result here distinctly uses the implicitly linear structure of the underlying evolution operator and only takes such quasilinear effects into consideration in the inhomogeneous, nonlinear iteration terms.

We note that a key part of our analysis assumes the existence of a natural polynomial expansion of the Dirichlet-Neumann map $G(h)$.  This is by no means a trivial assumption and requires a fair bit of regularity to hold  to a certain order as made clear by the increasing number of derivatives appearing in the expansion of $G(h)$, see for instance \cite{CraigGroves}.  Using the results of Nicholls-Reitich \cite{NR1,NR2} on the analyticity properties of the Dirichlet-Neumann map, one can be assured of a valid expansion  up to order $k$ in $2d$ if we require $h \in H^s$ for $s \geq 2$ with
\begin{eqnarray*}
\| G_k (h) \psi \|_{H^s} \lesssim \| \psi \|_{H^{s-\frac12}}.
\end{eqnarray*}  
Such requirements are  strong, but our minimal regularity threshold moves well beyond this particular regularity requirement.  

Finally, we remark that when we consider the  pure gravity water wave system ($\tau = 0$), the construction 
indicates the solution map is not 
$C^2$ for data $(h_0, \psi_0) \in H^{s-\frac12} \otimes H^{s}$ for $s<\frac52$.  However, in order to make sense of the Dirichlet-Neumann map, one needs to consider, at the minimum, an initial height $h_0$ in the class of Lipschitz functions, and the $h_0 \in H^s$ for $s < {2}$ threshold  fails to be Lipschitz. Iterating the pure gravity system further to third order does not  move past the formal $5\over2$ critical Sobolev exponent threshold.  Thus, this result can only be considered a formal failure of the iteration process.

\subsection{Formal argument for onset of regularity breakdown} 

Though the problem is fundamentally quasilinear, one can simply ask if the breakdown of flow-map regularity is dominated by the second-order terms in the nonlinear operator $\mathcal{N}$.   Expanding  the pure surface tension  to second order 
results in a problem that can be looked at via scaling.  In particular 
 we have the model
\begin{equation} \label{modelsurfacetension} \begin{split}
h_t & = |D| \psi - \operatorname{div}( h \nabla \psi) - |D| ( h |D|\psi) \\
\psi_t & = \Delta h + {1\over 2} \LC |D| \psi \RC^2 - {1\over2} \LV \nabla \psi \RV^2 
\end{split} \end{equation}
where we expanded the Dirichlet-to-Neumann operator to second order.  If we look for 
 scale invariant solution $(h, \psi) \mapsto (h_\lambda, \psi_\lambda)$ then 
\begin{align*}
h_\lambda & = \lambda^{-{2\over3}} h( \lambda t , \lambda^{2\over3} x) \\
\psi_\lambda & = \lambda^{-{1\over3}} \psi( \lambda t , \lambda^{2\over3} x).  
\end{align*}
A short calculation shows blowup occurs \emph{formally} for ${\dot H}^{s + {1 \over 2}} \otimes {\dot H}^{s}$ with $s < \frac32$.
As we will see, this argument is insufficient to understand the obstacle to local well-posedness in either the gravity-capillary or pure surface tension  problem, since  the mean-curvature operator does not show up until third order, and at that point the associated pseudodifferential operator becomes  dominant.

\begin{rem}
We note that a similar expansion of the  pure gravity water wave equations leads to the  model system
\begin{equation*} \label{modelgravity} \begin{split}
h_t & = |D| \psi - \operatorname{div}( h \nabla \psi) - |D| ( h |D|\psi) \\
\psi_t & = - h + {1\over 2} \LC |D| \psi \RC^2 - {1\over2} \LV \nabla \psi \RV^2
\end{split} \end{equation*}
with scale invariant  solutions $(h, \psi) \mapsto (h_\lambda, \psi_\lambda)$ satisfying
\begin{align*}
h_\lambda & = \lambda^{-2} h( \lambda t , \lambda^2 x) \\
\psi_\lambda & = \lambda^{-3} \psi( \lambda t , \lambda^2 x),
\end{align*}
which blow up for data in ${\dot H}^{s - {1 \over 2}} \otimes {\dot H}^{s}$ for
$s < \frac52$.  Hence
one could conjecture $\frac52$ to be the lower regularity threshold of \eqref{ww}, see Section~\ref{sec:puregravity}.  
In this case the model system  does accurately capture the essential dominant problem to low regularity solutions.  We note, however, that $h_0 \in H^s$ for $s < 2$ fails to be Lipschitz.   
\end{rem}

\subsection{Abstract $C^k$ regularity breakdown of flow map argument}
We follow the $C^2$ iterative method of Bona-Tzvetkov \cite{BonaTzvetkov}, used to study the BBM equation.    
Extracting the linear operator in \eqref{ww} we write it as
\begin{equation*}
\left\{
\begin{split}
\p_t h & = \LV D \RV \psi + \LB  G(h) \psi - \LV D \RV \psi \RB  \\
\p_t \psi & = \LC \tau \Delta - g \RC  h + \LB \tau \LC  \operatorname{div} \LC { \nabla h \over \sqrt{ 1 + \LV \nabla h \RV^2 }} \RC  
- \Delta h \RC
\right. \\
& \quad \quad \quad  \quad \quad \quad \left. - {1\over 2 } \LV \nabla \psi \RV^2 + { \LC G(h) \psi + \nabla h \cdot \nabla \psi \RC^2 \over 2 \LC 1 + \LV \nabla h \RV^2 \RC } \RB .
\end{split}\right.
\end{equation*}
Setting 
\[
\mathcal{L} = \begin{pmatrix} 0 & |D| \\ \tau \Delta - g & 0 \end{pmatrix},
\]
we write our  nonlinear system of equations as 
\begin{eqnarray}
\label{eqn:ww2d}
\left\{ \begin{array}{c}
\p_t \begin{pmatrix}
h \\
\psi
\end{pmatrix} =  \mathcal{L} \begin{pmatrix}
h \\
\psi
\end{pmatrix} + \mathcal{N} (h,\psi), \\
 \\
 \begin{pmatrix}
h(x,0)  \\
\psi(x,0) 
\end{pmatrix} = \begin{pmatrix}
\tilde{h}_0 \\
\tilde{\psi}_0
\end{pmatrix}
\end{array} \right.
\end{eqnarray}
where 
\begin{eqnarray*}
\mathcal{N} = \begin{pmatrix}
\mathcal{N}_1 (h,\psi)  \\
\mathcal{N}_2 (h,\psi)
\end{pmatrix}  .
\end{eqnarray*}
Then the Duhamel formula implies
\begin{eqnarray}
\begin{pmatrix}
h \\
\psi
\end{pmatrix} =  e^{\mathcal{L}t} \begin{pmatrix}
\tilde{h}_0 \\
\tilde{\psi}_0
\end{pmatrix} + \int_0^t e^{\mathcal{L}(t-t')} \mathcal{N} (t') dt' .
\end{eqnarray}

The linear flow operator is determined by 
\begin{align*}
e^{\mathcal{L}t} \begin{pmatrix}
H \\
\Psi
\end{pmatrix}  =  \begin{pmatrix} {L}_1 H  +  {L}_3  \Psi \\ {L}_2  H + {L}_1  \Psi \end{pmatrix} ,
\end{align*}
where 
\begin{align*}
\widehat{L}_1(\xi, t)  &  = \cos(\lambda (|\xi|) t)  \\
\widehat{L}_2(\xi, t) & = \sin(\lambda( |\xi|)t) {|\xi| \over \lambda(|\xi|)}  \\
\widehat{L}_3(\xi, t)& =  - \sin(\lambda (|\xi| ) t){\lambda(|\xi|) \over |\xi| },  
\end{align*}
$\xi\in\R^2$ and $\lambda(r) \in \{ \lambda_{gc}, \lambda_{st}, \lambda_g\}$, where
\begin{align}
\lambda_{gc}(r) & = \sqrt{g r + \tau r^3}, \\
\lambda_{st}(r) & =  \tau^{1\over2}  r^{3\over2}, \\
\lambda_{g}(r) & = g^{1\over2}  r^{1\over2} . \label{lambdag}
\end{align}  
In the following we will choose $g, \tau \in \{0,1\}$ and hence three possible wave functions.  
Our results differ distinctly between the cases $\tau = 0$ and $\tau \neq 0$.


Following the ideas of \cite{BonaTzvetkov} and \cite{BejenaruTao} we take
$$
\begin{pmatrix}
h(x,0)  \\
\psi(x,0) 
\end{pmatrix} = \alpha \begin{pmatrix}
h_0 \\
\psi_0
\end{pmatrix}
$$
where $\alpha \ge 0$ and $h_0$ and $\psi_0$ are 
functions specified below. Of course, when $\alpha = 0$
the solution of \eqref{ww} is the trivial solution. Our
goal is to show that the map 
$(\alpha,h_0,\psi_0) \longrightarrow (h(t),\psi(t))$
is not $C^3$ with respect to $\alpha$ (when $\alpha = 0$). This in turn
implies the solution map to to \eqref{ww} is
not $C^3$ (see below).

In Duhamel form, we write \eqref{eqn:ww2d} as 
\begin{align*}
\begin{pmatrix} h \\ \psi \end{pmatrix} (\alpha,t)
= e^{ t \mathcal{L}} \alpha \begin{pmatrix}
{h}_0 \\
{\psi}_0
\end{pmatrix}
+ \int_0^t e^{(t- t') \mathcal{L}} \mathcal{N}(\alpha, t') dt'.
\end{align*}
Here have abused notation
and written
$ \mathcal{N}(\alpha, t') $ in place
of $\mathcal{N}(h(\alpha, t'),\psi(\alpha,t'))$ in order to make explicit
the dependence on $\alpha$.

From assumed wellposedness we have 
\begin{equation} \label{initial}
\begin{pmatrix}
h \\
\psi
\end{pmatrix} (0,t) = \begin{pmatrix}
0 \\ 0
\end{pmatrix} .
\end{equation}
We can continue computing higher variations.  At first order we have
\begin{equation} \label{initialderiv}
\left. \p_\alpha \begin{pmatrix}
h \\
\psi
\end{pmatrix} (\alpha,t) \right|_{\alpha=0}= e^{t \mathcal{L}} \begin{pmatrix}
h_0 \\ \psi_0
\end{pmatrix},
\end{equation}
which follows from Duhamel's formula 
\[
\p_\alpha \begin{pmatrix}
h \\
\psi
\end{pmatrix} (\alpha,t) 
= e^{t \mathcal{L}} \begin{pmatrix}
h_0 \\ \psi_0
\end{pmatrix} + \int_0^t e^{(t- t') \mathcal{L}} \p_\alpha \mathcal{N}(0, t') dt'
\]
and the expansion in Section \ref{s:Nexp} below.
For the second iterate we have 
\begin{equation}  \label{seconditerate}
\left. \p^2_\alpha \begin{pmatrix}
h \\
\psi
\end{pmatrix} (\alpha,t) \right|_{\alpha=0}= \int_0^t e^{(t - t') \mathcal{L}} \p_\alpha^2 \mathcal{N}(0,t') dt'  .
\end{equation}
Finally, for the third iterate we have 
\begin{equation}  \label{thirditerate}
\left. \p^3_\alpha \begin{pmatrix}
h \\
\psi
\end{pmatrix} (\alpha,t) \right|_{\alpha=0}= \int_0^t e^{(t - t') \mathcal{L}} \p_\alpha^3 \mathcal{N}(0,t') dt'  .
\end{equation}

If our solution map is indeed $C^k$ with respect to the initial data and the topology $X^s(\R^2)$, then necessarily
\begin{equation}
\LN \left. \p_\alpha^k \begin{pmatrix} h \\ \psi \end{pmatrix} \right|_{\alpha=0} \RN_{X^s} \leq C \LN \begin{pmatrix} h_0 \\ \psi_0 \end{pmatrix} \RN^k_{X^s} 
\end{equation} 
for a uniform constant $C$.  Therefore, to show our mapping fails to be $C^k$, we need to show that for any $C$, there exists  explicit initial data $h_0, \psi_0$ 
such that 
\begin{equation}\label{violation}
\LN \left. \p_\alpha^k \begin{pmatrix} h \\ \psi \end{pmatrix} \right|_{\alpha=0} \RN_{X^s} > C \LN \begin{pmatrix} h_0 \\ \psi_0 \end{pmatrix} \RN^k_{X^s} .
\end{equation}

The following result identifies when the solution map fails $C^3$ continuity at the origin for gravity-capillary, pure surface tension, and pure gravity water wave equations with a one or two dimensional interface.

\begin{thm}\label{illposednesstheorem}
Let $X^s \equiv H^{s+{1\over2}} \otimes H^s$ and 
assume $\tau > 0$ and $g \in [0, \infty)$. For any $T>0$ the flow map associated to \eqref{ww} is not of class $C^3$  
\begin{itemize}
\item from $X^s $ to $C([0, T]; X^s)$ for $s < 3$ when  $d = 2$.  
\item from $X^s $ to $C([0, T]; X^s)$ for $s < \frac5 2$ when 
 $d = 1$.  
\end{itemize}
\end{thm}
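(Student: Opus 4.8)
The strategy is the one already set up above following Bona--Tzvetkov \cite{BonaTzvetkov}: $C^3$ regularity of the flow map forces the a priori bound $\|\partial_\alpha^3(h,\psi)|_{\alpha=0}\|_{X^s}\le C\|(h_0,\psi_0)\|_{X^s}^3$ for a fixed $C$, so it suffices to produce, for every $C>0$ and every $T>0$, explicit initial data $(h_0,\psi_0)$ and a time $t\in(0,T]$ for which \eqref{violation} holds with $k=3$. Everything thus reduces to bounding from below, in $X^s$, the single Duhamel expression \eqref{thirditerate}, $\int_0^t e^{(t-t')\mathcal{L}}\partial_\alpha^3\mathcal{N}(0,t')\,dt'$, and then letting the high-frequency parameter $N$ in the data tend to infinity; the final implication ``$C^3$ of the flow map fails'' follows because the map $(h_0,\psi_0)\mapsto(h,\psi)\in C([0,T];X^s)$ vanishes at the origin.

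I would first unfold $\partial_\alpha^3\mathcal{N}(0,t')$ using the homogeneous expansion $\mathcal{N}=\mathcal{N}^{(2)}+\mathcal{N}^{(3)}+\cdots$ from Section~\ref{s:Nexp}, which rests on the Craig--Sulem--Nicholls--Reitich expansion $G(h)=|D|+G_1(h)+G_2(h)+\cdots$ and on $\operatorname{div}\!\big(\nabla h/\sqrt{1+|\nabla h|^2}\big)-\Delta h=-\tfrac12\operatorname{div}(|\nabla h|^2\nabla h)+\cdots$. With $u^{(1)}(t')=e^{t'\mathcal{L}}(h_0,\psi_0)$ and $u^{(2)}$ the second iterate \eqref{seconditerate}, $\partial_\alpha^3\mathcal{N}(0,t')$ is (up to combinatorial constants) a sum of a genuinely cubic part $\mathcal{N}^{(3)}(u^{(1)},u^{(1)},u^{(1)})$ and a reinjection part $\mathcal{N}^{(2)}(u^{(1)},u^{(2)})$, so the third iterate is a triple plus a nested double time-and-frequency convolution against the multipliers $\widehat{L}_i$ with $\lambda=\lambda_{gc}$. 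Two features generate the loss: the cubic mean-curvature term carries four derivatives on three copies of $h$ and, passed through $\mathcal{L}$ whose symbol $\lambda_{gc}(|\xi|)/|\xi|\sim\tau^{1/2}|\xi|^{1/2}$ is unbounded, gains derivatives in the wrong direction; and $u^{(2)}$ already carries such a loss, so $\mathcal{N}^{(2)}(u^{(1)},u^{(2)})$ compounds it. It is precisely this coupling of the second and third iterates --- not the cubic term alone, whose direct contribution stalls below the stated thresholds exactly as the formal scaling argument of the introduction predicts --- that produces the sharp exponents.

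For the data I would take $(h_0,\psi_0)$ real and purely high-frequency: $\widehat{h_0}$ (and/or $\widehat{\psi_0}$) a bump of height $\sim\delta$ on $A_N\cup(-A_N)$, $A_N$ a slab/cap concentrated near a point of modulus $N\gg1$; realness supplies $-A_N$ and opens the resonant high-to-high channel $\xi+\xi-\xi\approx\xi$ (and the channel at frequency $\approx 0$ that feeds $u^{(2)}$), which is how small high-frequency energy becomes large high-frequency energy. The measure $|A_N|$ and the widths of $A_N$ are taken as large as the convexity of $r\mapsto\lambda_{gc}(r)$ permits without the iterated time integrals dephasing on $[0,T]$ --- this is where the sharp $s$, and the difference between $d=2$ and $d=1$, is decided --- so that $\|(h_0,\psi_0)\|_{X^s}\sim N^{s+1/2}\delta|A_N|^{1/2}$. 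The core of the proof is to isolate inside the third-iterate convolution the part supported where all internal frequencies are $\sim N$ and the dispersion relation is resonant, so that the time integrals produce secular factors $\sim t^p$ --- computed from elementary identities such as $\int_0^t\sin(\lambda(t-t'))\sin^3(\lambda t')\,dt'=\tfrac38 t\sin(\lambda t)+O(\lambda^{-1})$ and their nested analogues --- rather than the oscillatory $O(\lambda^{-1})$ contributions of the off-resonant part. Tracking the frequency weights ($H^{s+1/2}$ on $h$, $H^s$ on $\psi$), the multipliers $\widehat{L}_i$ and the derivative counts yields $\|\partial_\alpha^3(h,\psi)|_{\alpha=0}\|_{X^s}\gtrsim t^p N^{\sigma(s,d)}\|(h_0,\psi_0)\|_{X^s}^3$ with $\sigma(s,d)>0$ exactly for $s<3$ when $d=2$ and for $s<\tfrac52$ when $d=1$; choosing $t=t_N\in(0,T]$ with $\sin,\cos$ of $\lambda_{gc}(N)t_N$ bounded below and sending $N\to\infty$ contradicts \eqref{violation} for every $C$.

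The real difficulty sits in the last steps. One must show that the discarded pieces --- the off-resonant part of each (nested) convolution, the subleading cubic interactions (from $G_2$ and from the quadratic Bernoulli terms), and the reinjection term should it turn out to be of the same order as the leading one --- do not cancel the extracted lower bound; this is settled by checking a definite sign of the multilinear symbol at the chosen configuration, or by a small perturbation of the profile $\widehat{h_0}$. One must also make the dephasing bookkeeping precise enough to justify taking $|A_N|$ as large as needed, since this gain (rather than the bare cubic term) is what raises the threshold to $3$ and $\tfrac52$. A minor but genuine point, already flagged in the introduction, is that the scheme presupposes the polynomial expansion of $G(h)$ through third order and hence $h$ Lipschitz; this costs nothing since $N\to\infty$ only improves the smooth, band-limited data, and it is also why the argument stops at $C^3$ rather than reaching the weaker $C^2$ threshold coming from the second iterate alone.
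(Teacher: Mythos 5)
Your general framework is the right one (reduce $C^3$ failure to a lower bound on the third Duhamel iterate \eqref{thirditerate} for explicit high--frequency data and let $N\to\infty$), but the structural claim at the center of your argument is backwards, and it is exactly the point on which the thresholds $s<3$ and $s<\tfrac52$ depend. You assert that the genuinely cubic term ``stalls below the stated thresholds'' and that the sharp exponents come from the reinjection term $\mathcal{N}^{(2)}(u^{(1)},u^{(2)})$ compounding the loss already present in the second iterate. The computation goes the other way: with $h_0=0$ and $\widehat{\psi}_0$ supported on a sector at frequency $\sim N$, the first iterate has $\widehat{h}_1=\widehat{L}_3\widehat{\psi}_0$, and each factor of $h_1$ gains $\lambda_{gc}(|\xi|)/|\xi|\sim|\xi|^{1/2}$; the cubic mean-curvature term $-3\tau\operatorname{div}(\nabla h_1|\nabla h_1|^2)$ in \eqref{cubic2} carries the multiplier $p_{2,2}\sim N^4$ (Lemma~\ref{auxiliarylemmasupport:cubic}), so the $\widehat{L}_1p_{2,2}\widehat{L}_3\widehat{L}_3\widehat{L}_3$ block is of size $N^{11/2}$ and, after the short-time integration $t\lesssim N^{-3/2}$, yields the lower bound $N^{6-2s}\delta^{5/2}$ of \eqref{cclowerbound}, which against $\|(h_0,\psi_0)\|_{X^s}^3\sim\delta^{3/2}$ gives precisely $s<3$. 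The quadratic-times-second-iterate terms $\tilde{Q}_j$ are estimated at $N^{3-2s}\delta^{5/2}$ and are strictly lower order; relying on them, as you propose, would only reproduce a threshold at the level of the quadratic analysis ($s<3/2$ in $2d$), not $s<3$. You have also misread the scaling remark in the introduction: the ``formal'' exponent $3/2$ there refers to the quadratic truncation in which surface tension enters only through $\Delta h$; the whole point of going to third order is that the mean-curvature operator first appears there and \emph{is} the dominant contribution.

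Beyond this, the decisive bookkeeping is only asserted, not performed: you never verify that your exponent $\sigma(s,d)$ is positive exactly for $s<3$ ($d=2$) and $s<\tfrac52$ ($d=1$), and your mechanism differs from what actually closes the argument. The paper does not use a resonant channel $\xi+\xi-\xi\approx\xi$ with secular growth $t^{p}$; it uses a one-sided sector (so all three input frequencies add, the output sitting at $|\xi|\sim N$ in $E$), takes $t\le\tfrac1{100}N^{-3/2}$ so that the propagators $\widehat{L}_1$ are bounded below and no oscillation cancellation occurs, and uses $|\lambda(\xi-\eta-\nu)\pm\lambda(\nu)\pm\lambda(\eta)|\sim N^{3/2}$ only to evaluate the time integral at size $N^{-3/2}$. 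Your data choice (real, symmetric bumps of height $\delta$ with $h_0\neq0$) is not normalized as in \eqref{illposeddata:cubic}, and with a $(+,+,-)$ interaction plus a near-zero-frequency channel you would have to redo all multiplier and measure estimates from scratch; nothing in your sketch shows this variant reaches $s<3$, and your sign/perturbation remark does not substitute for the explicit domination of $p_{2,2}$ over $p_1$, $p_{2,1}$ and over $\tilde{\mathcal{Q}}$ that the proof requires. As written, the proposal would not establish the stated thresholds.
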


\begin{rem}
 Alazard-Burq-Zuily \cite{ABZ} proved that \eqref{ww} with $\tau > 0$ is well-posed in $X^s$ for all $s > 3$.  Therefore, our result provides in some sense an endpoint to local well-posedness.  
\end{rem}

\begin{rem}
\emph{Formally}, we find from our construction that the flow map associated to \eqref{ww} is not $C^3$ (or $C^2$) from  $Y^s \equiv H^{s-{1\over2}} \otimes H^s$ to $C([0, T]; Y^s)$ for $s < {5\over2}$  when $\tau =0$ and $g > 0$ and $d =2$.  
\end{rem}

In order to prove this result we will expand the operator to third order. This produces cubic terms in the Duhamel term involving the linear propagators.  Restricting our nontrivial data to $\psi_0$ with support on a high-frequency sector produces a class of data that fails $C^3$ continuity.  

\section{Expansion of $\mathcal{N}$ and $k$-iterates of \eqref{ww}}
\label{s:Nexp}

In this section, we assume the free suface $h\in H^s(\R^2)$ with $s\geq2$ so that the Dirichlet-Neumann map $G(h)$ has an analytic expansion (see Nicholls-Reitich \cite{NR1,NR2}).  The following proposition details the  quadratic and cubic approximation
of \eqref{ww} in $2d$.  

\begin{prop}\label{duhamelprop}
We have
\begin{align}
\left. \p_\alpha \mathcal{N}(\alpha,t) \right|_{\alpha=0} & = \begin{pmatrix} 0\\ 0 \end{pmatrix}, \label{1stvarN}\\
\left. \p^2_\alpha \mathcal{N} (\alpha,t)\right|_{\alpha=0} & = 
\begin{pmatrix} 
 -2 \LB \operatorname{div} \LC h_1 \nabla \psi_1 \RC + |D| \LC h_1 |D| \psi_1 \RC \RB
 \\\\  \LC |D| \psi_1 \RC ^2 - |\nabla \psi_1|^2
 \end{pmatrix}, \label{2ndvarN}
\end{align} 
where
\begin{equation}\label{h1psi1}
\begin{pmatrix} h_1 \\ \psi_1 \end{pmatrix}  =  e^{t \mathcal{L}} \begin{pmatrix} h_0 \\ \psi_0 \end{pmatrix} 
\end{equation}
and so from \eqref{seconditerate}
\begin{equation}\label{h2psi2}
\begin{pmatrix} h_2 \\ \psi_2 \end{pmatrix}  =  \int_0^t e^{(t - t') \mathcal{L}} \p_\alpha^2 \mathcal{N}(0,t') dt'   .
\end{equation}

Finally, we can define the third iterate $h_3, \psi_3$ in terms of the first and second iterate.  In particular we have
\begin{equation}
\begin{split}
\left. \p^3_\alpha \mathcal{N} (\alpha,t)\right|_{\alpha=0} & =
\begin{pmatrix} 
  \left. \p^3_\alpha \mathcal{N}_1 (\alpha,t)\right|_{\alpha=0} 
 \\\\  \left. \p^3_\alpha \mathcal{N}_2 (\alpha,t)\right|_{\alpha=0}
 \end{pmatrix}, 
 \end{split}\label{3rdvarN}
\end{equation}
where
\begin{align}
\label{cubic1}
\left. \p^3_\alpha \mathcal{N}_1 (\alpha,t)\right|_{\alpha=0} & =
-3 \left[  \operatorname{div} \left( h_1  \nabla \psi_2 \right) + |D| \left( {h_1 } |D| \psi_2 \right) \right] \notag \\
 &-3 \left[  \operatorname{div} \left( h_2  \nabla \psi_1 \right) + |D| \left( {h_2 } |D| \psi_1 \right) \right]  \\
& +3 \left[    \Delta \left( {h^{2}_1 } |D| \psi_1 \right) 
+
 |D| \left( h^2_1 \Delta \psi_1 \right)  + 2|D| \left( h_1  |D| \left( {h_1 } |D| \psi_1 \right) \right)\right] \notag
\end{align}
and 
\begin{align}
\label{cubic2}
\left. \p^3_\alpha \mathcal{N}_2 (\alpha,t)\right|_{\alpha=0} & =
-3{\tau } \operatorname{div}\left( \nabla h_1 | \nabla h_1|^2 \right) \notag \\
&- 6 |D| \psi_1 \left( h_1 \Delta \psi_1 + |D|(h_1|D|\psi_1) \right)\\
 &+3\left( [|D|\psi_1][|D|\psi_2]- \nabla \psi_1 \cdot \nabla \psi_2\right). \notag
\end{align}

\end{prop}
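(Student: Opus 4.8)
The plan is a direct Taylor--expansion computation: insert the analytic expansion of the Dirichlet--Neumann operator into $\mathcal{N}$, split $\mathcal{N}$ into pieces homogeneous in $(h,\psi)$, and then differentiate along the curve $\alpha\mapsto(h,\psi)(\alpha,t)$, reading off the coefficients of $\alpha$, $\alpha^2$ and $\alpha^3$ at $\alpha=0$.

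\emph{Expansion of the nonlinearity.} Since $h\in H^s$ with $s\ge2$, the rescaled Dirichlet--Neumann map has the convergent polynomial expansion $G(h)\psi=\sum_{j\ge0}G_j(h)\psi$ of Nicholls--Reitich \cite{NR1,NR2} (see also \cite{CraigGroves}), with $G_j$ homogeneous of degree $j$ in $h$ and linear in $\psi$; explicitly $G_0\psi=|D|\psi$, $G_1(h)\psi=-\operatorname{div}(h\nabla\psi)-|D|(h|D|\psi)$, and, after integration by parts, $G_2(h)\psi=\tfrac12\Delta(h^2|D|\psi)+\tfrac12|D|(h^2\Delta\psi)+|D|\bigl(h|D|(h|D|\psi)\bigr)$. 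In parallel, near $\nabla h=0$ one has the analytic expansions $\operatorname{div}\bigl(\nabla h(1+|\nabla h|^2)^{-1/2}\bigr)-\Delta h=-\tfrac12\operatorname{div}(|\nabla h|^2\nabla h)+\cdots$ and $\bigl(2(1+|\nabla h|^2)\bigr)^{-1}=\tfrac12-\tfrac12|\nabla h|^2+\cdots$, the dots denoting strictly higher--order terms. Writing $G(h)\psi+\nabla h\cdot\nabla\psi=|D|\psi+\bigl(G_1(h)\psi+\nabla h\cdot\nabla\psi\bigr)+G_2(h)\psi+\cdots$, squaring, multiplying by the expansion of $\bigl(2(1+|\nabla h|^2)\bigr)^{-1}$, and using the identity $-\operatorname{div}(h\nabla\psi)+\nabla h\cdot\nabla\psi=-h\Delta\psi$, one obtains $\mathcal{N}=\mathcal{N}^{[2]}+\mathcal{N}^{[3]}+\mathcal{R}$, where $\mathcal{N}^{[2]}$ and $\mathcal{N}^{[3]}$ are homogeneous of degrees $2$ and $3$ in $(h,\psi)$, the remainder $\mathcal{R}$ collects terms of degree $\ge4$, and
\[
\mathcal{N}^{[2]}=\begin{pmatrix}G_1(h)\psi\\[4pt]\tfrac12(|D|\psi)^2-\tfrac12|\nabla\psi|^2\end{pmatrix},\qquad
\mathcal{N}^{[3]}=\begin{pmatrix}G_2(h)\psi\\[4pt]-\tfrac{\tau}{2}\operatorname{div}(|\nabla h|^2\nabla h)-|D|\psi\bigl(h\Delta\psi+|D|(h|D|\psi)\bigr)\end{pmatrix}.
\]
In particular the mean--curvature term is absent from $\mathcal{N}^{[2]}$, consistent with the second--order model \eqref{modelsurfacetension}, and first enters through $\mathcal{N}^{[3]}$.

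\emph{Differentiation along the solution curve.} From \eqref{initial}, \eqref{initialderiv} and \eqref{seconditerate}, the curve has the expansion $(h,\psi)(\alpha,t)=\alpha(h_1,\psi_1)+\tfrac{\alpha^2}{2}(h_2,\psi_2)+O(\alpha^3)$, with $(h_1,\psi_1)=e^{t\mathcal{L}}(h_0,\psi_0)$ as in \eqref{h1psi1} and $(h_2,\psi_2)$ the second iterate \eqref{h2psi2}. For an operator $M$ homogeneous of degree $k$ in $(h,\psi)$ one has $\p_\alpha^j\bigl[M((h,\psi)(\alpha,t))\bigr]\big|_{\alpha=0}=0$ for $j<k$ and $=k!\,M(h_1,\psi_1)$ for $j=k$, while for a bilinear $B$ one also has $\p_\alpha^3\bigl[B(\cdot)\bigr]\big|_{\alpha=0}=3B\bigl((h_2,\psi_2),(h_1,\psi_1)\bigr)+3B\bigl((h_1,\psi_1),(h_2,\psi_2)\bigr)$; the remainder $\mathcal{R}$ contributes nothing to $\p_\alpha^j$ at $\alpha=0$ for $j\le3$. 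Hence the first $\alpha$--derivative vanishes, giving \eqref{1stvarN}; the second equals $2\mathcal{N}^{[2]}(h_1,\psi_1)$, which is \eqref{2ndvarN}. For the third: the bilinear $\mathcal{N}^{[2]}$ contributes $3\bigl(G_1(h_2)\psi_1+G_1(h_1)\psi_2\bigr)$ in the first slot --- the first two lines of \eqref{cubic1} --- and $3\bigl([|D|\psi_1][|D|\psi_2]-\nabla\psi_1\cdot\nabla\psi_2\bigr)$ in the second --- the last line of \eqref{cubic2}; the cubic $\mathcal{N}^{[3]}$ contributes $6\mathcal{N}^{[3]}(h_1,\psi_1)$, of which $6G_2(h_1)\psi_1$ is the third line of \eqref{cubic1}, $6\cdot(-\tfrac{\tau}{2})\operatorname{div}(|\nabla h_1|^2\nabla h_1)=-3\tau\operatorname{div}(|\nabla h_1|^2\nabla h_1)$ is the first line of \eqref{cubic2}, and $-6|D|\psi_1\bigl(h_1\Delta\psi_1+|D|(h_1|D|\psi_1)\bigr)$ is the second. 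Together with \eqref{3rdvarN} and \eqref{thirditerate} this yields the stated cubic formula.

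\emph{Expected obstacle.} The computation is mostly bookkeeping, and the care needed is concentrated in two places. First, one must confirm that the truncation remainder $\mathcal{R}$ really is of degree $\ge4$ in $(h,\psi)$, so that --- via the analyticity estimates of \cite{NR1,NR2} for $G(h)$ with $h\in H^s$, $s\ge2$, and the smoothness of the algebraic maps $\nabla h\mapsto(1+|\nabla h|^2)^{-1/2}$ near $0$ --- it is $O(\alpha^4)$ along the solution curve and so drops out of $\p_\alpha,\p_\alpha^2,\p_\alpha^3$ at $\alpha=0$. Second, one needs the exact form of the quadratic operator $G_2(h)\psi$ and the collapse $G_1(h)\psi+\nabla h\cdot\nabla\psi=-h\Delta\psi-|D|(h|D|\psi)$; a sign or combinatorial slip in these, or in the degree--counting, is the most plausible source of error. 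Everything else is the multilinear chain rule applied to the already--established Duhamel identities \eqref{seconditerate}--\eqref{thirditerate}.
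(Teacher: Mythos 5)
Your proposal is correct and follows essentially the same route as the paper: expand $G(h)$ via the Nicholls--Reitich recursion to get $G_0,G_1,G_2$, expand the solution as $\alpha(h_1,\psi_1)+\tfrac{\alpha^2}{2}(h_2,\psi_2)+O(\alpha^3)$, use the collapse $-\operatorname{div}(h\nabla\psi)+\nabla h\cdot\nabla\psi=-h\Delta\psi$, and read off the $\alpha^2$ and $\alpha^3$ coefficients. Your packaging via the homogeneous pieces $\mathcal{N}^{[2]},\mathcal{N}^{[3]}$ and the multilinear chain rule is just a tidier bookkeeping of the paper's direct substitution and gives the same formulas.
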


We prove this in the following argument.
The Dirichlet-to-Neumann map is denoted 
$$G(h) \psi = G_0 \psi + G_1(h) \psi + G_2 (h)\psi + \dots.$$
From \cite{NR2}, we have 
$$
G(0) = G_0 = | D |,
$$
where $D = - i \nabla_x$ and further from (\cite{NR2}, equation (21) on page 114) we have the recursion relation:
$$
G_n(h) \psi =  |D|^{n-1} D\left( {h^n \over n!} D \psi \right) - \sum_{l = 0}^{n-1} |D|^{n-l} \left( {h^{n-l} \over (n-l)!} G_l(h) \psi \right).
$$

Thus,
\begin{equation}\begin{split}
G_1(h) \psi = & D\left( h D \psi \right) - \sum_{l = 0}^{0} |D|^{1-l} \left( {h^{1-l} \over (1-l)!} G_l(h) \psi \right)\\
=& D\left( h D \psi \right) - |D| \left( {h } G_0(h) \psi \right)\\
=& D\left( h D \psi \right) - |D| \left( {h } |D| \psi \right).
\end{split}
\end{equation}
Using the definition of $D$:
\begin{equation}
G_1(h) \psi=-\operatorname{div} \left( h \nabla \psi \right) - |D| \left( {h } |D| \psi \right).
\end{equation}

The recursion formula with $n =2$ gives:
\begin{equation}\begin{split}
G_2(h) \psi =&  |D| D\left( {h^2 \over 2} D \psi \right) - \sum_{l = 0}^{1} |D|^{2-l} \left( {h^{2-l} \over (2-l)!} G_l(h) \psi \right)\\
=&  |D| D\left( {h^2 \over 2} D \psi \right) -\left( 
 |D|^{2} \left( {h^{2} \over 2} G_0(h) \psi \right) 
+ |D| \left( {h} G_1(h) \psi \right) 
\right).\\
\end{split}
\end{equation}
Using the formulae for $G_1$ and $G_0$ then rearranging terms:
\begin{equation}\begin{split}
=&
{1 \over 2}  |D| D\left( {h^2} D \psi \right) -\left( 
{1 \over 2}  |D|^{2} \left( {h^{2} } |D| \psi \right) 
+ |D| \left\{ {h}\left[ -\operatorname{div} \left( h \nabla \psi \right) - |D| \left( {h } |D| \psi \right)\right]\right\} 
\right)\\
=& -{1 \over 2}  |D|^{2} \left( {h^{2} } |D| \psi \right) + {1 \over 2}  |D| D\left( {h^2} D \psi \right)
+  |D| \left\{ {h}\left[ \operatorname{div} \left( h \nabla \psi \right) + |D| \left( {h } |D| \psi \right)\right]\right\} \\
=&  -{1 \over 2}  |D|^{2} \left( {h^{2} } |D| \psi \right) 
+|D| \left\{
{1 \over 2}D(h^2 D \psi) +   {h}\operatorname{div} \left( h \nabla \psi \right) + h  |D| \left( {h } |D| \psi \right)
\right\}\\
\end{split}
\end{equation}
If we use $D = -i \nabla$ in the first term in the curly braces:
\begin{equation}\label{doom}
\begin{split}
G_2(h) \psi=&  -{1 \over 2}  |D|^{2} \left( {h^{2} } |D| \psi \right) 
+|D| \left\{
-{1 \over 2}\operatorname{div} (h^2 \nabla \psi) +   {h}\operatorname{div} \left( h \nabla \psi \right) + h  |D| \left( {h } |D| \psi \right)
\right\}
\end{split}
\end{equation}

Then the product rule gives:
\begin{eqnarray*}
-{1 \over 2}\operatorname{div} (h^2 \nabla \psi) +   {h}\operatorname{div} \left( h \nabla \psi \right) & = & 
-{1 \over 2}\left( 2 h \nabla h \cdot \nabla \psi + h^2 \Delta \psi\right) + h^2 \Delta \psi + h \nabla h \cdot \nabla \psi \\
& = & {1 \over 2} h^2 \Delta \psi.
\end{eqnarray*}

And thus putting this into \eqref{doom}, we have
$$
G_2(h) \psi =  -{1 \over 2}  |D|^{2} \left( {h^{2} } |D| \psi \right) 
+|D| \left\{
{1 \over 2} h^2 \Delta \psi + h  |D| \left( {h } |D| \psi \right) \right\}.
$$
Using the relationship $\Delta = - |D|^2$ gives:
$$
G_2(h) \psi =  {1 \over 2}  \Delta \left( {h^{2} } |D| \psi \right) 
+
{1 \over 2} |D| \left( h^2 \Delta \psi\right)  + |D| \left( h  |D| \left( {h } |D| \psi \right) \right).
$$

\subsection{The $\alpha$ derivatives of $\N$}
We expand the solution $(h,\psi)$ in powers of $\alpha$:
\be\label{expansion}
h(\alpha) = \al h_1 +{1 \over 2} \al^2 h_2 + O(\al^3) \ \text{ and } \ \psi(\al) = \al \psi_1 + {1 \over 2} \al^2 \psi_2 + O(\al^3). 
\ee

We need to compute the expansion of the nonlinearity $\N$ in terms $\al$. We want expressions for $\partial_\al^n \N$, $n = 1,2,3$.

\subsubsection{For $\N_1$}
We have
$$
\N_1 := G(h) \psi - G_0 \psi.
$$
Plugging in \eqref{expansion}, we get:
\begin{eqnarray}
\N_1 & = & G(h(\al))\psi(\al) - G_0 \psi(\alpha) \notag \\ &= & G_1(\al h_1 + \al^2 h_2/2 + O(\al^3)) (\al \psi_1 + \al^2 \psi_2/2 + O(\al^3)) \label{ugly} \\
&& +G_2(\al h_1 + \al^2 h_2/2 + O(\al^3)) (\al \psi_1 + \al^2 \psi_2/2 + O(\al^3) ) + \dots. \notag
\end{eqnarray}
Note that $G_n(\al h) = \al^n G_n(\al)$. Similarly for all $n$ there is an $n$-multilinear operator $\tilde{G}_n(h_1,\dots,h_n)$ such that
$$
G_n(h) = \tilde{G}_n(h,\dots,h).
$$
Using these two facts in \eqref{ugly} and then collecting powers of $\al$ gives:
$$
\N_1(\alpha) = \al^2 G_1(h_1) \psi_1 + \al^3 \left( {1 \over 2} G_1(h_2) \psi_1 + {1 \over 2} G_1(h_1) \psi_2 + G_2(h_1) \psi_1 \right) + O(\al^4).
$$
So we have 
$$
\partial_\al \N_1 = 0,
$$
$$
\partial_\al^2 \N_1 = -2\left[\operatorname{div} \left( h \nabla \psi \right) + |D| \left( {h } |D| \psi \right)\right]$$
and 
$$
\partial_\al^3 \N_1 = 3 G_1(h_2) \psi_1 + 3 G_1(h_1) \psi_2 +6 G_2(h_1) \psi_1.$$
Or rather, if we use the above computations for $G_1$ and $G_2$:
\begin{equation}
\begin{split}
\partial_\al^3 \N_1 = & -3 \left[  \operatorname{div} \left( h_1  \nabla \psi_2 \right) + |D| \left( {h_1 } |D| \psi_2 \right) \right]\\
 &-3 \left[  \operatorname{div} \left( h_2  \nabla \psi_1 \right) + |D| \left( {h_2 } |D| \psi_1 \right) \right]\\
& +3 \left[    \Delta \left( {h_1^{2} } |D| \psi_1 \right) 
+
 |D| \left( h_1^2 \Delta \psi_1 \right)  + 2|D| \left( h_1  |D| \left( {h_1 } |D| \psi_1 \right) \right)\right].
 \end{split}
\end{equation}

\subsubsection{For $\N_2$}
Now let us compute the expansion of $\N_2$. Indeed, from \eqref{ww}, we have
\begin{eqnarray*}
\N_2 & := & \tau\left[
\operatorname{div} \left( { \nabla h \over \sqrt{1 + |\nabla h|^2}} \right)-\Delta h\right] - {1 \over 2 } |\nabla \psi|^2 + {(G(h) \psi + \nabla h \cdot \nabla \psi)^2\over 2 (1 + |\nabla h|^2)} \\
& = & I + II + III.
\end{eqnarray*}
Taylor expansion and substitution of \eqref{expansion} gives:
$$
{1 \over  \sqrt{1 + |\nabla h|^2}} = 1 - {1 \over 2} |\nabla h|^2 + O(|\nabla h|^4) = 1 - \alpha^2{1 \over 2} |\nabla h_1|^2  + O(\alpha^4).
$$
Using this and \eqref{expansion} in $I$ gives:
\begin{multline}
I= \tau\left[
\operatorname{div} \left( { \nabla h \over \sqrt{1 + |\nabla h|^2}} \right)-\Delta h\right]  
\\= \tau\left[\operatorname{div} \left(  \nabla h  \left(1 - \alpha^2{1 \over 2} |\nabla h_1|^2\right) \right)-\Delta h\right] +O(\al^4) 
\\=\tau \operatorname{div} \left(  \nabla h  \left( - \alpha^2{1 \over 2} |\nabla h_1|^2\right) \right)\ +O(\al^4) 
\\= -\alpha^3 {\tau \over 2} \operatorname{div}\left( \nabla h_1 | \nabla h_1|^2 \right) + O(\al^4).\\
\end{multline}

The similar expansion for $III$ is pretty easy:
$$
II = - {1 \over 2 } |\nabla \psi|^2 = -\alpha^2{1 \over 2} |\nabla \psi_1|^2 - \alpha^3 {1 \over 2}\nabla \psi_1 \cdot \nabla \psi_2 + O(\al^4).
$$

To deal with $III$ notice that \eqref{expansion} shows that
$(G(h) \psi + \nabla h \cdot \nabla \psi)^2 = O(\al^2)$. Also, Taylor expansion gives:
$$
{1 \over (1 + |\nabla h|^2)} = 1  + O(|\nabla h|^2) = 1 + O(\al^2) 
$$
So we have:
$$
III=  {(G(h) \psi + \nabla h \cdot \nabla \psi)^2\over 2 (1 + |\nabla h|^2)} = {1 \over 2} (G(h) \psi + \nabla h \cdot \nabla \psi)^2 + O(\al^4).
$$
Then, since $G_2(h) \psi = O(\al^3)$, we have, with \eqref{expansion}
\begin{equation}
\begin{split}
III = &{1 \over 2} \left( G_0 \psi + G_1(h) \psi  + \nabla h \cdot \nabla \psi\right)^2 +O(\al^4)\\
      = &{1 \over 2} \left( \al G_0 \psi_1 +{1 \over 2} \al^2 G_0 \psi_2  + \al^2 G_1(h_1) \psi_1  + \al^2 \nabla h_1 \cdot \nabla \psi_1\right)^2 +O(\al^4)\\
     = & \al^2 {1 \over 2} (G_0 \psi_1)^2 + \al^3 \left( G_0 \psi_1\right) \left(   {1 \over 2} G_0 \psi_2 + G_1(h_1)  \psi_1 + \nabla h_1 \cdot \nabla \psi_1 \right)   +O(\al^4).
\end{split}
\end{equation}
If we insert the expressions for $G_1$ and $G_0$ we get
\begin{equation}
\begin{split}
III=&\al^2 {1 \over 2} (|D| \psi_1)^2 + \al^3(|D|\psi_1) \left( {1 \over 2} |D| \psi_2 - \operatorname{div}(h_1 \nabla \psi_1)-|D|(h_1|D|\psi_1) + \nabla h_1 \cdot \nabla \psi_1 
\right)\\
&+O(\al^4)\\
=&\al^2 {1 \over 2} (|D| \psi_1)^2 + \al^3(|D|\psi_1) \left( {1 \over 2} |D| \psi_2 - h_1 \Delta \psi_1-|D|(h_1|D|\psi_1) 
\right)+O(\al^4).
\end{split}
\end{equation}
Differentiation with respect to $\al$ and organizing terms gives:
$$
\partial_\al \N_2 = 0.
$$
Again we differentiate:
$$
\partial_\al^2 \N_2  =  (|D| \psi_1)^2- |\nabla \psi_1|^2,
$$
giving \eqref{2ndvarN}.

The third derivative is:
\begin{eqnarray*}
\partial_\al^3 \N_2 & = & -3{\tau } \operatorname{div} \left( \nabla h_1 | \nabla h_1|^2 \right) -3 \nabla \psi_1 \cdot \nabla \psi_2 \\
&& + 6(|D|\psi_1) \left( {1 \over 2} |D| \psi_2 - h_1 \Delta \psi_1-|D|(h_1|D|\psi_1) 
\right).
\end{eqnarray*}
A quick reorganization of terms gives:
\begin{equation}
\begin{split}
\partial_\al^3 \N_2 = &-3{\tau } \operatorname{div} \left( \nabla h_1 | \nabla h_1|^2 \right)\\
 & \ \ - 6 |D| \psi_1 \left( h_1 \Delta \psi_1 + |D|(h_1|D|\psi_1) \right)\\
  & \ \ \ +3\left( [|D|\psi_1][|D|\psi_2]- \nabla \psi_1 \cdot \nabla \psi_2\right),
\end{split}
\end{equation}
which completes \eqref{3rdvarN}.

For $1d$  interfaces, the expansion of $\p_\alpha \mathcal{N}(p(\alpha))|_{\alpha = 0}$ produces six terms instead of the 
twelve terms in the $2d$ case.   


\section{The Quadratic Threshold for Regularity of the Flow Map}
\label{S:ill}

We begin by studying the second-order iteration of the solution map. 
 From \eqref{seconditerate} and \eqref{duhamelprop}, we have the following Duhamel terms
\begin{equation}
\mathcal{Q} = 2\int_0^t e^{(t - t') \mathcal{L}} \begin{pmatrix} 
 -\operatorname{div} \LC h_1 \nabla \psi_1 \RC - |D| \LC h_1 |D| \psi_1 \RC 
 \\\\  {1 \over 2} \LC |D| \psi_1 \RC ^2 - {1 \over 2} |\nabla \psi_1|^2
 \end{pmatrix} d t' .
\end{equation}
Let 
\begin{align*}
Q_1 (h, \psi) & \equiv  -\operatorname{div} \LC h \nabla \psi \RC - |D| \LC h |D| \psi \RC , \\
Q_2 (h, \psi) & \equiv  {1\over2} \LC | D | \psi \RC^2 - {1\over 2} \LV \nabla \psi \RV^2 .
\end{align*}
Then,
\begin{align*}
\widehat{Q_1}(t,\xi) & = \int_{\R^2} m_1(\xi,\eta)  \widehat{h}(t, \xi-\eta) \widehat{\psi}(t, \eta) d \eta, \\
\widehat{Q_2}(t,\xi) & =  \int_{\R^2} m_2(\xi,\eta) \widehat{\psi}(t, \xi-\eta) \widehat{\psi}(t, \eta) d \eta,
\end{align*}
where 
\begin{align*}
m_1(\xi,\eta) & \equiv  \xi \cdot \eta  - |\xi| |\eta|  =   |\xi| |\eta| \LC \cos\theta_1 - 1 \RC, \\
m_2 (\xi,\eta) & \equiv{1\over2}  \LC (\xi - \eta) \cdot \eta  + |\xi - \eta| |\eta| \RC
 =  {1\over 2} |\xi - \eta | |\eta| \LC \cos\theta_2 + 1 \RC
\end{align*}
and 
\begin{equation}\label{angledefinitions}
\begin{split}
\theta_1 &= \hbox{ angle between } \xi \hbox { and } \eta, \\
\theta_2 & = \hbox{ angle between } \xi  - \eta \hbox { and } \eta .
\end{split}
\end{equation}

Expanding $\mathcal{Q}$ in terms of the linear propagators yields
\begin{equation}\label{defnQ}
\int_0^t e^{(t-t')\mathcal{L}} 
 {\begin{pmatrix} Q_1\LC e^{t'\mathcal{L}} \begin{pmatrix} h_0 \\ \psi_0 \end{pmatrix}\RC \\   Q_2\LC e^{t'\mathcal{L}} \begin{pmatrix} h_0 \\ \psi_0 \end{pmatrix}\RC \end{pmatrix}} dt'
= \int_0^t  \begin{pmatrix} \mathcal{Q}_1 \\ \mathcal{Q}_2 \end{pmatrix} dt'
\end{equation}
for the implicitly defined $\mathcal{Q}_j$.  
Taking the Fourier Transform of the above second order iterations yields the following set of terms:
\begin{align*}
\widehat{\mathcal{Q}_1} 
& =  \widehat{L}_1 (\xi,t-t') \widehat{Q}_1(\xi, t') + \widehat{L}_2(\xi, t-t') \widehat{Q}_2(\xi,t')        \\
& = \int \widehat{L}_1(\xi,t-t') m_1(\xi,\eta) \LC \widehat{L}_1(\xi-\eta, t') \widehat{h}(\xi-\eta) + \widehat{L}_2(\xi-\eta,t') \widehat{\psi}(\xi-\eta) \RC \\
& \quad \quad \times 
\LC \widehat{L}_3(\eta,t') \widehat{h}(\eta) + \widehat{L}_1(\eta,t') \widehat{\psi}(\eta) \RC  d \eta \\
& \quad +  \int \widehat{L}_2(\xi,t-t') m_2(\xi,\eta) \LC \widehat{L}_3(\xi-\eta, t') \widehat{h}(\xi-\eta) + \widehat{L}_1(\xi-\eta,t') \widehat{\psi}(\xi-\eta) \RC \\
& \quad \quad \times 
\LC \widehat{L}_3(\eta,t') \widehat{h}(\eta) + \widehat{L}_1(\eta,t') \widehat{\psi}(\eta) \RC  d \eta 
\end{align*}
and
\begin{align*}
\widehat{\mathcal{Q}_2} 
& =  \widehat{L}_3 (\xi,t-t') \widehat{Q}_1(\xi, t') + \widehat{L}_1(\xi, t-t') \widehat{Q}_2(\xi,t')        \\
& = \int \widehat{L}_3(\xi,t-t') m_1(\xi,\eta) \LC \widehat{L}_1(\xi-\eta, t') \widehat{h}(\xi-\eta) + \widehat{L}_2(\xi-\eta,t') \widehat{\psi}(\xi-\eta) \RC \\
& \quad \quad \times 
\LC \widehat{L}_3(\eta,t') \widehat{h}(\eta) + \widehat{L}_1(\eta,t') \widehat{\psi}(\eta) \RC  d \eta \\
& \quad +  \int \widehat{L}_1(\xi,t-t') m_2(\xi,\eta) \LC \widehat{L}_3(\xi-\eta, t') \widehat{h}(\xi-\eta) + \widehat{L}_1(\xi-\eta,t') \widehat{\psi}(\xi-\eta) \RC \\
& \quad \quad \times 
\LC \widehat{L}_3(\eta,t') \widehat{h}(\eta) + \widehat{L}_1(\eta,t') \widehat{\psi}(\eta) \RC  d \eta .
\end{align*}

Of the 16 terms in the two Duhamel operators $\mathcal{Q}_1$ and $\mathcal{Q}_2$, we look for a dominant lower bound term that violates an assumed upper bound.   This lower bound will arise from the following construction:

\subsection{Candidate for Breakdown of Flow Map Regularity}

For $N$ large and $\delta$ small we set
\begin{equation}\label{illposeddata}  \begin{split} 
\widehat{h} & = 0 , \\
\widehat{\psi} &= N^{-(s+1)} {1}_{N  \leq |\xi | \leq 2N} {1}_{| \theta | \leq \delta },
\end{split} \end{equation}
where $\xi = |\xi| e^{i\theta}$; hence, the support of $\widehat{\psi}$ lies on a small arc far from the origin.  Then our expression for $\mathcal{Q}_1$ and $\mathcal{Q}_2$ reduces to  the following four terms:
\begin{equation}
\begin{split}
 A &= \int \widehat{L}_1(\xi,t-t') m_1(\xi,\eta)  \widehat{L}_2(\xi-\eta, t') \widehat{L}_1 (\eta,t') \widehat{\psi}(\xi-\eta) \widehat{\psi}(\eta) d \eta, \\
B &= \int \widehat{L}_2(\xi,t-t') m_2(\xi,\eta)  \widehat{L}_1(\xi-\eta, t') \widehat{L}_1 (\eta,t') \widehat{\psi}(\xi-\eta) \widehat{\psi}(\eta) d \eta, \\
C &= \int \widehat{L}_3(\xi,t-t') m_1(\xi,\eta)  \widehat{L}_2(\xi-\eta, t') \widehat{L}_1 (\eta,t') \widehat{\psi}(\xi-\eta) \widehat{\psi}(\eta) d \eta, \\
D &= \int \widehat{L}_1(\xi,t-t') m_2(\xi,\eta)  \widehat{L}_1(\xi-\eta, t') \widehat{L}_1 (\eta,t') \widehat{\psi}(\xi-\eta) \widehat{\psi}(\eta) d \eta. \end{split}
\end{equation}

We will show that $D$ dominates the other terms and gives the required lower bound.  Since $m_1$ and $m_2$ have explicit dependence on the angle between $\xi$ and $\eta$, we first prove an auxiliary lemma that explains how the support of our 
constructed $\psi$ affects the support where $\psi(\xi-\eta) \psi(\eta)$ is nontrivial.   

In the following we write $A\lesssim B$ if there exists a universal constant $C$ such that $A \leq C B$.  
Likewise we write $A \gtrsim B$ if there exists a $C$ with $A \geq CB$.  Finally,  write $A \sim B$ if $A\lesssim B $ and $A \gtrsim B$.

\begin{lem}  \label{auxiliarylemmasupport}
If $N$ is large and $\delta$ is small then  
$\hat{\psi}(\xi-\eta)\hat{\psi}(\eta)$
has support in a set with 
\begin{equation} \label{xisame}
|\xi| \sim N \hbox { and } |\theta| \leq  \delta
\end{equation}
where $\xi  = |\xi|e^{i\theta}$ .
Furthermore, we have
\begin{align}
\LV m_1(\xi,\eta) \RV & \lesssim  N^2\delta^2 \label{m1same},  \\
\LV m_2(\xi,\eta) \RV & \sim  N^2 \label{m2same} .
\end{align}
\end{lem}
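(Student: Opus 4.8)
The plan is to exploit the elementary observation that $\hat\psi(\xi-\eta)\hat\psi(\eta)$ is nonzero only when both $\eta$ and $\xi-\eta$ lie in the sector $S = \{\zeta \in \R^2 : N \le |\zeta| \le 2N,\ |\arg\zeta| \le \delta\}$ determined by \eqref{illposeddata}, so that $\xi = \eta + (\xi - \eta)$ is a sum of two vectors from $S$. I would first record that if $v,w \in S$ then $\arg(v+w)$ lies between $\arg v$ and $\arg w$ (a weighted-mediant argument on $\tan$, valid since all arguments lie in $(-\pi/2,\pi/2)$), hence $|\arg(v+w)| \le \delta$; applied to $v = \eta$, $w = \xi - \eta$ this gives $|\theta| \le \delta$. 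For the size of $\xi$, the upper bound is the triangle inequality $|\xi| \le |\xi-\eta| + |\eta| \le 4N$, and for the lower bound I would look at the first coordinate: each of $\eta$ and $\xi - \eta$ has first coordinate at least $N\cos\delta \ge N/2$ once $\delta$ is small, so the first coordinate of $\xi$ is at least $N$ and thus $|\xi| \ge N$. This is \eqref{xisame}.

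For the bounds on the symbols I would use the trigonometric forms already supplied in the definitions of $m_1$ and $m_2$. The angle $\theta_1$ between $\xi$ and $\eta$ satisfies $\theta_1 \le |\arg\xi| + |\arg\eta| \le 2\delta$, so $|\cos\theta_1 - 1| = 1 - \cos\theta_1 \le \tfrac12\theta_1^2 \le 2\delta^2$; since $|\xi||\eta| \le 4N\cdot 2N$, this yields \eqref{m1same}. Likewise the angle $\theta_2$ between $\xi - \eta$ and $\eta$ satisfies $\theta_2 \le 2\delta$, so for $\delta$ small $1 \le \cos\theta_2 + 1 \le 2$, and combined with $N^2 \le |\xi-\eta||\eta| \le 4N^2$ we get $|m_2(\xi,\eta)| = \tfrac12|\xi-\eta||\eta|(\cos\theta_2+1) \sim N^2$, which is \eqref{m2same}.

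There is no serious obstacle here; the content of the lemma is just the geometric fact that restricting $\hat\psi$ to a thin sector forces the interacting frequencies $\eta$ and $\xi-\eta$ to be nearly parallel, so that $m_1$ (the ``$\cos-1$'' symbol, which vanishes when the two vectors are parallel) is quadratically small in $\delta$ while $m_2$ (the ``$\cos+1$'' symbol) stays comparable to $N^2$. The only care needed is in fixing once and for all how small $\delta$ must be — enough to keep $\cos\delta \ge 1/2$, to use $1 - \cos\theta_1 \le \theta_1^2/2$, and to keep every angle below $\pi$ so that ``the angle between'' is unambiguous — and in checking that all implied constants are universal, independent of $N$, $\delta$, $t$, and $t'$.
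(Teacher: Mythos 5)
Your proof is correct and follows essentially the same route as the paper: both arguments reduce to the observation that $\eta$ and $\xi-\eta$ lie in the thin sector, so $\xi$ does too, the angles $\theta_1,\theta_2$ are $O(\delta)$, and the trigonometric forms of $m_1,m_2$ then give the stated bounds. The only cosmetic difference is that you obtain the lower bound $|\xi|\gtrsim N$ from the first coordinates of $\eta$ and $\xi-\eta$ rather than from the law of cosines, and you track the angle bound as $2\delta$ rather than $\delta$; neither change affects the conclusion.
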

\begin{proof}

Supose $\xi -\eta$ and $\eta$ both lie in the same sector.  Then by the  parallelogram law, $\xi$ lies in that sector too, and $|\theta| \leq \delta $.    Thus, we also have $|\theta_1| \leq \delta$ and $| \theta_2 | \leq 2 \delta$.  Therefore,
\begin{align*}
&|\cos\theta_1 - 1 |  \leq \delta^2/2, \\
&1 \leq |\cos\theta_2 + 1 |  \leq 2
\end{align*}
for $\delta \leq {\pi \over2}$.

Obviously we have $|\xi|\leq 4N$. On the other hand, by the law of cosines $|\xi|^2 = |\xi - \eta |^2 + |\eta|^2 - 2 |\xi - \eta| |\eta| \cos(\pi - \theta_2)$,
so
\begin{align*}
|\xi|^2 &\geq 2N^2 - 2N^2\cos(\pi - 2\delta) = 2N^2 (1 + \cos 2\delta) \geq 2N^2.
\end{align*}
Hence $|\xi| \sim N$ and therefore,  
\[
 |m_1(\xi,\eta)|  = |\xi| |\eta| \LV \cos\theta_1 -  1 \RV \lesssim N^2\delta^2  
\]
and
\[
 |m_2 (\xi,\eta)|  = {1\over2} |\xi - \eta | |\eta| |\cos\theta_2 + 1 | \sim N^2 .
\]
\end{proof}

In the gravity-capillary case we have $\lambda_{gc} (r) = \sqrt{r^3 + r}$ so 
when $r \gg 1$, 
\begin{equation} \label{gcratio}
{\lambda_{gc}(r) \over |r| } \approx r^{1\over2}.
\end{equation}
Next, in order for $\cos(\lambda_{gc}(r) t') \geq {1\over 2}$ for $r \gg 1$ and all $0 \leq t' \leq t$, we require that 
$t \ll r^{-{3\over2}}$; 
therefore, we choose 
\begin{equation} \label{gctimebound}
0 \leq t \leq  {1\over 100}{ 1\over N^a} \ \hbox{ with } \  a \geq {3 \over 2}. 
\end{equation}
The following proposition follows from Lemma~\ref{auxiliarylemmasupport} and \eqref{gcratio}--\eqref{gctimebound}.

\begin{prop} \label{lowerboundprop}
On the support of $\widehat{\psi}(\xi-\eta)\widehat{\psi}(\eta)$ and for $0 \leq t' \leq t \leq  {1\over 100}{ 1\over N^a}$ with $a\geq3/2$ we have the following bounds:
\begin{eqnarray*}
\LV \!\!\!\!\! \right. && \!\!\!\!\! \widehat{L}_3(\xi, t-t') m_1 \widehat{L}_2(\xi - \eta,t') \widehat{L}_1(\eta,t') + \widehat{L}_1(\xi,t-t') m_2 \widehat{L}_1(\xi - \eta,t') \widehat{L}_1(\eta,t')  \\
&& \left. + \widehat{L}_2(\xi,t-t') m_2(\xi,\eta)  \widehat{L}_1(\xi-\eta, t') \widehat{L}_1 (\eta,t') + \widehat{L}_1(\xi,t-t') m_1(\xi,\eta)  \widehat{L}_2(\xi-\eta, t') \widehat{L}_1 (\eta,t') \RV \\
&& \ \ \ \ \sim  N^2.  
\end{eqnarray*}
\end{prop}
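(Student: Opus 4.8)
The plan is to isolate the dominant term in the displayed sum --- the one carrying three factors of $\widehat{L}_1$, i.e. the integrand of $D$ --- show it has size exactly $\sim N^2$, and then show that each of the other three terms is smaller by a positive power of $N$ (or of $\delta$), so that none of them can spoil the lower bound. I would start from the outputs of Lemma~\ref{auxiliarylemmasupport}: on $\supp\widehat{\psi}(\xi-\eta)\widehat{\psi}(\eta)$ one has $|\xi|\sim N$, and trivially $|\xi-\eta|\sim N$ and $|\eta|\sim N$ from the support of $\widehat{\psi}$, while $|m_1|\lesssim N^2\delta^2$ and $|m_2|\sim N^2$. The relevant point is that $m_1$ is forced to be small by the angular concentration of the data, whereas $m_2$ is of full size.

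Next I would record a pair of elementary estimates on the propagators, valid for frequencies of size $\sim N$ and times $0\le t'\le t\le\frac{1}{100}N^{-a}$ with $a\ge\frac32$. Since $\lambda_{gc}(r)=\sqrt{r^3+r}\sim N^{3/2}$ there by \eqref{gcratio}, the phase $\lambda_{gc}(r)(t-t')$ (and likewise $\lambda_{gc}(r)t'$) is $\lesssim N^{3/2-a}$, hence bounded by a small absolute constant by \eqref{gctimebound}; so every factor $\widehat{L}_1=\cos(\lambda_{gc}(\cdot)(t-t'))$ appearing in the sum is positive and $\sim 1$, while $|\sin(\lambda_{gc}(r)t')|\le\lambda_{gc}(r)t'\lesssim N^{3/2-a}$ gives $|\widehat{L}_2|\lesssim N^{3/2-a}N^{-1/2}=N^{1-a}\le N^{-1/2}$ and $|\widehat{L}_3|\lesssim N^{3/2-a}N^{1/2}=N^{2-a}\le N^{1/2}$, the last inequalities using $a\ge\frac32$. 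In other words the time restriction \eqref{gctimebound} is precisely what forces the $\sin$-type multipliers to be small.

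With these in hand the comparison is mechanical. The $D$-term $\widehat{L}_1(\xi,t-t')\,m_2\,\widehat{L}_1(\xi-\eta,t')\,\widehat{L}_1(\eta,t')$ is a product of three positive factors of size $\sim 1$ and one positive factor $m_2\sim N^2$, hence $\sim N^2$. The $A$-term loses one $\widehat{L}_2$ and carries $m_1$, so it is $\lesssim N^{1-a}\cdot N^2\delta^2\lesssim N^{3/2}\delta^2$; the $B$-term loses one $\widehat{L}_2$, so it is $\lesssim N^{1-a}\cdot N^2\lesssim N^{3/2}$; the $C$-term loses both $\widehat{L}_3$ and $\widehat{L}_2$ and trades $m_2$ for $m_1$, so it is $\lesssim N^{2-a}\cdot N^2\delta^2\cdot N^{1-a}\lesssim N^2\delta^2$. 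Thus the whole sum equals the $D$-term plus an error $O(N^{3/2}+N^2\delta^2)$, and since the $D$-term is positive of size $\sim N^2$, choosing $\delta$ small and then $N$ large absorbs the error and delivers $\sim N^2$ as both a lower and an upper bound. I expect the only delicate point to be the exponent bookkeeping: one must keep straight that each $\widehat{L}_2$ or $\widehat{L}_3$ contributes a decaying factor $N^{1-a}$ or $N^{2-a}$ and each $m_1$ a decaying factor $\delta^2$, so that every summand except the one matching $D$ is genuinely lower order --- there is no real analytic difficulty beyond this careful accounting.
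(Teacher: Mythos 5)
Your proposal is correct and follows essentially the same route as the paper: both isolate the term $\widehat{L}_1\,m_2\,\widehat{L}_1\,\widehat{L}_1$, note that the time restriction $t\leq \frac{1}{100}N^{-a}$ with $a\geq \frac32$ makes all phases small so that each $\widehat{L}_1$ is $\geq \frac12$ and $m_2\sim N^2$ gives the size, and then use $|\sin x|\leq |x|$ together with $|m_1|\lesssim N^2\delta^2$ to show the remaining three terms are of lower order. The only cosmetic difference is that the paper bounds the product $\widehat{L}_3(\xi)\widehat{L}_2(\xi-\eta)$ jointly through $\lambda^2(\xi)(t-t')t'\leq (3/100)^2$, whereas you bound $\widehat{L}_2$ and $\widehat{L}_3$ separately by $N^{1-a}$ and $N^{2-a}$; the bookkeeping is equivalent.
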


\begin{proof}
From \eqref{gctimebound} and Lemma~\ref{auxiliarylemmasupport} we know that for $N$ large
$$\lambda(\xi)(t-t'), \lambda(\xi)t', \lambda(\xi-\eta)t', \lambda(\eta)t' \leq {3\over100}N^{{3\over2}-a}\leq {3\over100}.$$
Therefore 
\[
1 \geq \widehat{L}_1(\xi,t-t'), \widehat{L}_1(\xi - \eta,t'), \widehat{L}_1(\eta,t')\geq {1\over2}
\]
and hence
\[
\LV \widehat{L}_1(\xi,t-t') m_2 \widehat{L}_1(\xi - \eta,t') \widehat{L}_1(\eta,t') \RV \geq {1\over 16} |\xi - \eta| |\eta|.
\]

On the other hand,
\begin{equation}\label{L3m1L2L1:cap}
\begin{split}
\LV \widehat{L}_3 m_1 \widehat{L}_2 \widehat{L}_1 \RV &\leq \LV \sin\LC \lambda(\xi)(t-t') \RC \RV {\lambda(\xi)  \over |\xi|}  |\xi| |\eta| \LC {\delta^2\over2} \RC \LV {\sin \LC\lambda(\xi-\eta)t'\RC \over \lambda(\xi-\eta)} \RV |\xi -\eta|\\
& = {\delta^2\over2}|\xi-\eta||\eta| \lambda^2(\xi)(t-t')t' \LV {\sin \LC\lambda(\xi)(t-t')\RC \over \lambda(\xi)(t-t')} \RV \LV {\sin \LC\lambda(\xi-\eta)t'\RC \over \lambda(\xi-\eta)t'} \RV\\
&\leq {\delta^2\over2}|\xi-\eta||\eta| \lambda^2(\xi)(t-t')t' \leq {\delta^2\over2}\LC {3\over100} \RC^2 |\xi-\eta||\eta|.
\end{split}
\end{equation}
Similarly,
\begin{equation*}
\begin{split}
\LV \widehat{L}_2 m_2 \widehat{L}_1 \widehat{L}_1 \RV & \leq \LV \sin\LC \lambda(\xi)(t-t') \RC \RV {|\xi| \over\lambda(\xi)}  |\xi-\eta| |\eta|\\
& = |\xi-\eta||\eta| |\xi|(t-t') \LV {\sin \LC\lambda(\xi)(t-t')\RC \over \lambda(\xi)(t-t')} \RV \\
&\leq|\xi-\eta||\eta| |\xi|(t-t') \lesssim N^{3-a},
\end{split}
\end{equation*}
\begin{equation*}
\begin{split}
\LV \widehat{L}_1 m_1 \widehat{L}_2 \widehat{L}_1 \RV &\leq {\delta^2\over2}|\xi| |\eta|  \LV {\sin \LC\lambda(\xi-\eta)t'\RC \over \lambda(\xi-\eta)} \RV |\xi -\eta|\\
& = {\delta^2\over2}|\xi||\eta||\xi-\eta|t' \LV {\sin \LC\lambda(\xi-\eta)t'\RC \over \lambda(\xi-\eta)t'} \RV\\
&\leq  {\delta^2\over2}|\xi||\eta||\xi-\eta|t' \lesssim \delta^2N^{3-a},
\end{split}
\end{equation*}
and we obtain the claimed lower bound.

\end{proof}

\subsection{Breakdown of Flow Map Regularity}

We now give a result relating to a $C^2$ regularity threshold.   Assume $(h,\psi)$ are
wellposed in $X^s \equiv H^{s+{1\over 2}} \otimes H^s$ for a range of $s\in \R$.  Then 
\begin{align*}
\sup_{0\leq t \leq T}\LN \mathcal{Q} \RN_{X^s} \leq  \LN \begin{pmatrix} h_0 \\ \psi_0 \end{pmatrix} \RN_{X^s}^2 .
\end{align*} 
Taking $(h_0, \psi_0)$ as in \eqref{illposeddata}, then
\begin{equation}   \label{supposedupperbound}
\LN \begin{pmatrix} h_0 \\ \psi_0 \end{pmatrix} \RN_{X^s} = \LN \psi_0 \RN_{H^s} = \LN \LA \xi \RA^s \widehat{\psi_0} \RN_{L^2} \sim \delta^{1/2}
\end{equation}
since the support lies on a set of area $\LC N^2 \delta \RC$.  So
\begin{equation}\label{assumeduppergravitycapillary}
\sup_{0 \leq t \leq 1} \LN \mathcal{Q} \RN_{X^s} \lesssim  \delta .
\end{equation}
Recall that $0 \leq t \leq {1\over 100} {1 \over N^a}$ for $a \geq {3 \over 2}$.  Note that from Lemma \ref{auxiliarylemmasupport} the region where $\mathcal{Q}$ is nontrivial lies in the high frequency sector of $\xi$. In particular, consider the high frequency sector  $E = \left\{ 2N \leq \LV \xi \RV \leq 4N, |\theta| \leq \delta/2 \right\}$, then we know $|E| \sim N^2\delta$.
From Proposition~\ref{lowerboundprop} we have 
\begin{align*}
\sup_{0 \leq t \leq T} \LN \mathcal{Q} \RN_{X^s} 
& \geq 2\sup_{0 \leq t \leq T} \LN    \LA \xi \RA^{s + {1 \over 2}} \int^t_0 \widehat{\mathcal{Q}_1} dt' \RN_{L^2(E)} + 2 \sup_{0 \leq t \leq T} \LN    \LA \xi \RA^{s} \int^t_0 \widehat{\mathcal{Q}_2} dt' \RN_{L^2(E)} \\
& \geq 2 \sup_{0 \leq t \leq T} \LN    \LA \xi \RA^{s} \int^t_0 \widehat{\mathcal{Q}_2} dt' \RN_{L^2(E)} \\ 
& \gtrsim \LN N^s \int_0^{1 \over 100N^a} \int_I N^2 N^{-2s-2}\ d\eta \RN_{L^2(E)} \\
& \gtrsim N^{-s} N^{-a} N^2 \delta (N^2\delta)^{1/2} \\
& = N^{3-a -s} \delta^{3/2},
\end{align*}
where $I = \left\{ \eta = |\eta| e^{i\theta}:\  N - 10 \leq |\eta| \leq N +10, |\theta| \leq \delta \right\}$. Taking $\delta = N^{-2\epsilon}$ for some $\epsilon>0$ then 
\[
\sup_{0 \leq t \leq T} \LN \mathcal{Q} \RN_{X^s}  \gg N^{0+}\delta,
\]
which violates \eqref{assumeduppergravitycapillary} 
when  $0 < 3 - a  - \epsilon - s \leq 3 - {3/2} - \epsilon - s = 3/2  - 3\epsilon - s$
or 
\[
s < 3/2 .
\]

As mentioned before, this $C^2$ threshold corresponds to solutions with $h \in H^s$ for $s < 2$.  Since $h$ is not necessarily Lipschitz,
the Dirichlet-Neumann operator may not make sense.  We next iterate to the third level, where the surface tension operator appears.

\section{Cubic Terms}

We take our candidate for regularity breakdown and see how it behaves under
the next iteration.  For $N$ large and $\delta$ small we again set
\begin{equation}\label{illposeddata:cubic}  
\begin{split} 
\widehat{h} & = 0 , \\
\widehat{\psi} &= N^{-(s+1)} {1}_{ N \leq |\xi | \leq 2N } {1}_{| \theta | \leq \delta } ,
\end{split} 
\end{equation}
where $\xi = |\xi| e^{i\theta}$; hence, the support of
$\widehat{\psi}$ lies on a small arc far from the origin.
We set
\begin{eqnarray*}
I = \{ \eta = | \eta | e^{i \theta} : N \leq | \eta | \leq
2N, \ | \theta | < \delta \}.
\end{eqnarray*}

From \eqref{thirditerate}, \eqref{cubic1} and \eqref{cubic2}, the third iterate now can be decomposed into a quadratic interaction between the first and second iterations and also a cubic interaction of the first iteration. To be more precise, we have
\begin{align}\label{thirditeratedecomposition}
\mathcal{C} & = \int^t_0 e^{(t-t')\mathcal{L}} \p_\alpha^3 \mathcal{N}(0, t')dt' = 3 \int^t_0 e^{(t-t')\mathcal{L}} \begin{pmatrix}  \tilde{Q}_1\\ \tilde{Q}_2 \end{pmatrix}(t')dt' + \int^t_0 e^{(t-t')\mathcal{L}} \begin{pmatrix}  C_1\\ C_2 \end{pmatrix}(t')dt' \\
& \equiv 3 \tilde{\mathcal{Q}} + \tilde{\mathcal{C}}, \notag
\end{align}
where
\begin{equation} \label{3rdcomponents} \begin{split}
\tilde{Q}_1 = &  -\operatorname{div} \LC h_1 \nabla \psi_2 \RC - |D| \LC h_1 |D|
\psi_2 \RC -  \operatorname{div} \LC h_2 \nabla \psi_1 \RC - |D| \LC h_2 |D| \psi_1 \RC, \\
\tilde{Q}_2 = &\  |D|\psi_1 |D|\psi_2 - \nabla \psi_1 \cdot \nabla \psi_2 , \\
C_1 = & {3} \LB \Delta \LC h_1^2  |D| \psi_1 \RC + |D| \LC h_1^2 \Delta \psi_1 \RC 
+ 2 |D|( h_1 |D| (h_1 |D| \psi_1)) \RB, \\
C_2 = & \ -6 \LB h_1 \Delta \psi_1  + |D|(h_1 |D| \psi_1)   \RB  |D| \psi_1  \\
& - 3 \tau \LB (3 h_{1,xx} h_{1,x}^2 + 3h_{1,yy} h_{1,y}^2+ 4h_{1,y} h_{1,x} h_{1,xy} + h_{1,x}^2 h_{1,yy} + h_{1,y}^2 h_{1,xx})  \RB.
\end{split}\end{equation}

First, we prove bounds on the cubic interactions resulting from the
cubic terms in the nonlinear expansion.  Let $p_1$ correspond to the appropriate cubic pseudodifferential
multiplier leading to the cubic terms in $C_1$.
Let $p_{2,1}$, $p_{2,2}$ correspond to the appropriate cubic pseudodifferential
multipliers leading to the cubic terms $(h_1,h_1,\psi_1)$, respectively
$(h_1,h_1,h_1)$ in $C_2$. Expanding, we have
\begin{eqnarray*}
p_1 (\xi, \nu, \eta) & = &-3 |\xi| |\eta| ( |\xi| + |\eta| -2 |\xi -\nu| ), \\
p_{2,1} (\xi,\nu,\eta) & = &  6|\eta| \LB   | \nu|^2 - |\xi - \eta| |\nu| \RB  , \\
p_{2,2} (\xi,\nu,\eta) & = & -9 \tau \LB (\xi_1 -\eta_1 -\nu_1)\nu_1\eta_1^2 + (\xi_2 -\eta_2 -\nu_2)\nu_2\eta_2^2 \RB  \\
&& - \tau  \LB  (\xi_1 -\eta_1 -\nu_1)\nu_1\eta_2^2 + ( \xi_2 -\eta_2 -\nu_2)\nu_2\eta_1^2 \RB \\
 && - 12 \tau \LB (\xi_1 -\eta_1 - \nu_1)\nu_2\eta_1\eta_2 \RB .
\end{eqnarray*}

\begin{lem}  \label{auxiliarylemmasupport:1d}
If $N$ is large and $\delta$ is small then  
$\hat{\psi}(\xi - \eta - \nu) \hat{\psi}(\eta)\hat{\psi}(\nu)$
has support in a set with 
\begin{equation} \label{xisame:1d}
|\xi| \sim N \hbox { and } |\theta| \leq  \delta
\end{equation}
where $\xi  = |\xi|e^{i\theta}$ .
\end{lem}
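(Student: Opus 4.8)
The plan is to mirror the proof of Lemma~\ref{auxiliarylemmasupport} verbatim, now tracking the trilinear product rather than the bilinear one. First I would observe that $\widehat{\psi}$ is supported on the sector $S = \{N \leq |\zeta| \leq 2N,\ |\theta| \leq \delta\}$. For the product $\widehat{\psi}(\xi - \eta - \nu)\widehat{\psi}(\eta)\widehat{\psi}(\nu)$ to be nonzero we need each of $\xi - \eta - \nu$, $\eta$, and $\nu$ to lie in $S$. Writing $\xi = (\xi - \eta - \nu) + \eta + \nu$ as a sum of three vectors, each of modulus comparable to $N$ and each making angle at most $\delta$ with (say) the positive $x$-axis, I would conclude by the same elementary geometry used in Lemma~\ref{auxiliarylemmasupport} (a convex-combination/parallelogram argument applied twice) that the sum $\xi$ also makes angle at most $\delta$ with that axis, hence $|\theta| \leq \delta$.

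Next I would pin down $|\xi| \sim N$. The upper bound $|\xi| \leq 6N$ is immediate from the triangle inequality. For the lower bound, since all three summands $\xi-\eta-\nu$, $\eta$, $\nu$ have modulus at least $N$ and pairwise angles at most $2\delta$, for $\delta$ small each pair has inner product at least, say, $N^2/2$; projecting $\xi$ onto the direction of $\eta$ (or just using $|\xi|^2 = |\xi-\eta-\nu|^2 + |\eta|^2 + |\nu|^2 + 2(\xi-\eta-\nu)\cdot\eta + 2(\xi-\eta-\nu)\cdot\nu + 2\eta\cdot\nu$ and discarding none of the now-positive cross terms) gives $|\xi|^2 \geq 3N^2$, hence $|\xi| \geq \sqrt{3}\,N$. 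Therefore $|\xi| \sim N$, which is exactly \eqref{xisame:1d}.

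The argument is essentially routine once one notices it is the $k=3$ analogue of the already-proved $k=2$ lemma; the only mild subtlety is that with three vectors the "same sector" containment must be propagated through two applications of the two-vector fact (first $\eta + \nu$ lands in the sector, then $(\xi - \eta - \nu) + (\eta + \nu)$ does), and one should be slightly careful that the angular widths add — but since all three original vectors are confined to a single width-$\delta$ cone about a fixed direction, the partial sums stay in that same cone, so no widening occurs and the bound $|\theta| \leq \delta$ (rather than $3\delta$) is clean. I do not expect any genuine obstacle here; the lemma is purely a support bookkeeping statement whose role is to feed the subsequent estimates on $p_1$, $p_{2,1}$, $p_{2,2}$ on this support, analogous to \eqref{m1same}--\eqref{m2same}.
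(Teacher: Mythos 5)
Your proposal is correct and follows essentially the same route as the paper: write $\xi=(\xi-\eta-\nu)+\eta+\nu$, use convexity of the sector (the paper phrases this as two applications of the parallelogram law) for $|\theta|\leq\delta$, the triangle inequality for $|\xi|\leq 6N$, and positivity of the nearly-parallel cross terms for the lower bound. The only difference is cosmetic: the paper gets the lower bound by applying the law of cosines twice through $\eta+\nu$ (obtaining $|\xi|\geq 2N$), while you expand $|\xi|^2$ directly and drop the positive inner products (obtaining $|\xi|\geq\sqrt{3}\,N$), which equally yields $|\xi|\sim N$.
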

\begin{proof}

From \eqref{illposeddata:cubic} we know that  $\xi -\eta - \nu$, $\eta$ and $\nu$ all lie in the same sector. By parallelogram law, $\eta+\nu$ also lies in the sector. Because $\xi = (\xi - \eta - \nu) + (\eta + \nu)$, then by parallelogram law, $\xi$ lies in that sector too, and $|\theta| \leq \delta $.  As for an estimate of $|\xi|$, obviously we have $|\xi| \leq |\xi - \eta - \nu| + |\eta| + |\nu| \leq 6N$.  

By law of cosines, we can find angles $\alpha_1$ and $\alpha_2$ between $\xi-\eta-\nu$ and $\eta+\nu$, $\eta$ and $\nu$ respectively such that
\begin{align*}
|\xi|^2 & = |\xi - \eta -\nu |^2 + |\eta + \nu|^2 - 2 |\xi - \eta - \nu| |\eta + \nu| \cos\alpha_1, \\
|\eta + \nu|^2 & = |\eta|^2 + |\nu|^2 - 2 |\eta| |\nu| \cos \alpha_2,
\end{align*}
so
\begin{align*}
|\eta + \nu|^2  \geq N^2 + N^2 - 2 N^2 \cos(\pi - 2\delta) = 2 N^2 \LC 1 + \cos(2 \delta ) \RC .
\end{align*}
Therefore,  
\[
 |\xi|^2 \geq N^2 +  2 N^2 \LC 1 + \cos(2 \delta ) \RC + 2 N^2 \sqrt{2\LC 1 + \cos(2 \delta ) \RC} \cos (2\delta) \geq 4N^2
\]
and thus
\[
 2N \leq |\xi| \leq 6 N.
\]

\end{proof}

The following lemma provides a useful bound on the pseudodifferential operators arising in the cubic expansion. 

\begin{lem}  \label{auxiliarylemmasupport:cubic}
If $N$ is large and $\delta$ is small then we have
\begin{align}
\LV p_1(\xi,\eta) \RV, \  \LV \right. & \!\! \left.  p_{2,1} (\xi,\eta) \RV  \lesssim  N^3 , \label{p21same} \\
\LV p_{2,2} (\xi,\eta) \RV   & \sim N^4 \label{p22same} .
\end{align}
\end{lem}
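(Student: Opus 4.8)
\textbf{Plan for the proof of Lemma~\ref{auxiliarylemmasupport:cubic}.}
The plan is to run exactly the same geometric argument that gave Lemma~\ref{auxiliarylemmasupport}, but now with three factors of $\widehat{\psi}$ instead of two. First I would invoke Lemma~\ref{auxiliarylemmasupport:1d}: on the support of $\widehat{\psi}(\xi-\eta-\nu)\widehat{\psi}(\eta)\widehat{\psi}(\nu)$ we have $|\xi-\eta-\nu|,|\eta|,|\nu|\sim N$, and moreover $|\xi|\sim N$ with all of $\xi,\eta,\nu$ (and $\xi-\nu$, since $\xi-\nu=(\xi-\eta-\nu)+\eta$) lying in the same $\delta$-arc. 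Consequently every angle occurring in the multipliers $p_1$, $p_{2,1}$, $p_{2,2}$ is $O(\delta)$, so that each cosine is $1+O(\delta^2)$ and each sine is $O(\delta)$.

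\textbf{Upper bounds for $p_1$ and $p_{2,1}$.} For $p_1(\xi,\nu,\eta)=-3|\xi||\eta|(|\xi|+|\eta|-2|\xi-\nu|)$, the point is that $|\xi|$, $|\eta|$ and $|\xi-\nu|$ are all $\sim N$ and, crucially, the combination $|\xi|+|\eta|-2|\xi-\nu|$ exhibits a cancellation: writing $\xi=(\xi-\nu)+\nu$ with $\nu$ and $\xi-\nu$ nearly parallel gives $|\xi|=|\xi-\nu|+|\nu|+O(N\delta^2)$ and similarly $|\eta|=|\xi-\nu|+O(N\delta)$ type relations; combining these shows $|\xi|+|\eta|-2|\xi-\nu|$ is a sum of terms each of size $O(N)$, hence $|p_1|\lesssim N^2\cdot N=N^3$. (We do not need the finer $\delta$ gain here, only $O(N)$.) For $p_{2,1}(\xi,\nu,\eta)=6|\eta|\bigl[|\nu|^2-|\xi-\eta||\nu|\bigr]=6|\eta||\nu|\bigl(|\nu|-|\xi-\eta|\bigr)$, again $|\eta|,|\nu|\sim N$ and $|\nu|-|\xi-\eta|=O(N)$ (both are $\sim N$), so $|p_{2,1}|\lesssim N^3$. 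This gives \eqref{p21same}.

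\textbf{Two-sided bound for $p_{2,2}$.} The multiplier $p_{2,2}$ is the symbol of the mean-curvature cubic $\operatorname{div}(\nabla h_1|\nabla h_1|^2)$, which is genuinely a fourth-order operator, so we expect $|p_{2,2}|\sim N^4$ with \emph{no} $\delta$ loss. For the upper bound: each monomial in $p_{2,2}$ is $\tau$ times a product of four components drawn from $\xi-\eta-\nu$, $\nu$, $\eta$, each of which has modulus $\lesssim N$, so $|p_{2,2}|\lesssim N^4$. For the matching lower bound one groups the terms: $p_{2,2}$ is (up to the fixed constant $-\tau$ and combinatorial factors coming from the product rule applied to $\partial^3_\alpha$ of $\operatorname{div}(\nabla h_1|\nabla h_1|^2)$) the symmetrization of $(\xi-\eta-\nu)\cdot\nu\,|\eta|^2$-type expressions; since $\xi-\eta-\nu$, $\eta$, $\nu$ all point in essentially the same direction $e^{i\theta_0}$, every dot product $(\xi-\eta-\nu)\cdot\nu$ is $\sim N^2$ and $|\eta|^2\sim N^2$ with a \emph{definite sign}, so no cancellation can occur and the sum is $\sim N^4$. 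Concretely I would evaluate $p_{2,2}$ at, say, $\xi-\eta-\nu=\eta=\nu=(N,0)$ (the center of the support) and check it equals a nonzero multiple of $\tau N^4$; then note that perturbing within the $\delta$-arc only changes it by $O(\tau N^4\delta)$, which is lower order. This yields \eqref{p22same}.

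\textbf{Main obstacle.} The only delicate point is the lower bound in \eqref{p22same}: one must be certain that the various monomials in $p_{2,2}$ — which carry different numerical coefficients ($-9\tau$, $-\tau$, $-12\tau$) and mix the $\xi_1,\eta_1,\nu_1$ and $\xi_2,\eta_2,\nu_2$ components — do not conspire to cancel on the support arc. Because the support forces all frequencies into a single narrow sector, this reduces to checking that the scalar obtained by collapsing every vector to a common direction is nonzero, i.e. that the polynomial $\operatorname{div}(\nabla h|\nabla h|^2)$ has nonvanishing symbol for a plane-wave $h$; this is immediate since its principal symbol is (a constant times) $|\xi|^4$ along a single direction. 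Everything else is a repetition of the bookkeeping already carried out in the proof of Lemma~\ref{auxiliarylemmasupport}.
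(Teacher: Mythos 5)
Your proposal is correct and follows essentially the same route as the paper: the support localization of Lemma~\ref{auxiliarylemmasupport:1d} gives crude $N^3$ bounds on $p_1,p_{2,1}$, and for $p_{2,2}$ the paper likewise exploits that all three frequencies lie in one narrow sector so the dominant monomial (the product of first components, in the paper written in polar coordinates as $r_1r_3r_2^2\cos\beta_1\cos\beta_3\cos^2\beta_2$) has a definite sign and size $\sim N^4$ while the remaining terms carry factors of $\sin\beta_j=O(\delta)$. The only cosmetic difference is that you evaluate at an aligned configuration and perturb, whereas the paper computes directly in polar form; just note that the radial variables range over $[N,2N]$, so the uniform $\sim N^4$ bound should come from the single-signed dominant monomial over the whole support rather than from an $O(\delta)$ perturbation of the center value.
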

\begin{proof}
The bound on $p_{2,1}$ is immediate.  Since $\xi - \nu = (\xi - \eta - \nu) + \eta$, we know that $\xi - \nu$ also lies in the sector $|\theta| \leq \delta$ and $|\xi - \nu| \sim N$. Then the estimate of $p_1$ can be easily obtained from Lemma \ref{auxiliarylemmasupport:1d}.  

To finish the proof, denote
\[
\xi - \eta - \nu = r_1 e^{i\beta_1}, \quad \eta = r_2e^{i\beta_2}, \quad \nu = r_3 e^{i\beta_3},
\]
then $|\beta_j|\leq \delta, \ N\leq r_j\leq2N, \ j = 1, 2, 3$. Hence
\begin{align*}
|p_{2,2}| &\sim 9r_1r_3r_2^2 \cos\beta_1\cos\beta_3\cos^2\beta_2 +  9r_1r_3r_2^2 \sin\beta_1\sin3\beta_3\sin^2\beta_2 + 3 r_1r_3r_2^2 \cos\beta_1\cos\beta_3\sin^2\beta_2 \\
& \quad + 3 r_1r_3 r_2^2 \sin\beta_1\sin\beta_3\cos^2 \beta_2 + 12 r_1r_3r_2^2 \cos\beta_1\cos\beta_3\cos\beta_2\sin\beta_2\\
& \sim r_1r_3r_2^2 \sim N^4.
\end{align*}
\end{proof}
Recall that $0 \leq t \leq {1\over 100} {1 \over N^a}$ for $a \geq {3
  \over 2}$.  Note that from Lemma \ref{auxiliarylemmasupport:1d}
the region where $\tilde{\mathcal{C}}$ is nontrivial lies in the high
frequency sector of $\xi$. In particular, consider the high frequency
sector  $E = \left\{ 2N \leq \LV \xi \RV \leq 6N, |\theta| \leq \delta \right\}$, then we know $|E| \sim N^2\delta$.

We are left with understanding the Duhamel term
\begin{equation}
\tilde{\mathcal{C}} =  \int_0^t e^{(t - t') \mathcal{L}} \begin{pmatrix} 
C_1 (t', h_1, \psi_1) 
 \\  C_2 (t', h_1, \psi_1)
 \end{pmatrix} d t' .
\end{equation}
Then,
\begin{align*}
\widehat{C_1}(t,\xi) & = \int_{\R^2} \int_{\R^2} p_1(\xi,\nu,\eta)
\widehat{h}_1(t, \xi-\eta-\nu) \widehat{h}_1(t, \nu) \widehat{\psi}_1(t,
\eta) d \eta d \nu \\
\widehat{C_2}(t,\xi) & =  \int_{\R^2} \int_{\R^2} \LB  p_{2,1} (\xi,\nu,\eta)
\widehat{h} (t, \xi-\eta-\nu) \widehat{\psi}_1(t, \nu) \widehat{\psi}_1(t,
\eta) \right.\\
& \quad \left. +  p_{2,2} (\xi,\nu,\eta)
\widehat{h}_1(t, \xi-\eta-\nu) \widehat{h}_1(t, \nu) \widehat{h}_1(t,
\eta) \RB d \eta d\nu .
\end{align*}

Expanding $\tilde{\mathcal{C}}$ in terms of the linear propagators yields
\[
\int_0^t e^{(t-t')\mathcal{L}} 
 {\begin{pmatrix} C_1\LC e^{t'\mathcal{L}} \begin{pmatrix} h_0 \\ \psi_0 \end{pmatrix}\RC \\   C_2\LC e^{t'\mathcal{L}} \begin{pmatrix} h_0 \\ \psi_0 \end{pmatrix}\RC \end{pmatrix}} dt'
= \int_0^t  \begin{pmatrix} \mathcal{C}_1 \\ \mathcal{C}_2 \end{pmatrix} dt'
\]
for the implicitly defined $\mathcal{C}_j$.  
Taking the Fourier Transform of the above second order iterations yield the following set of terms:
\begin{align*}
\widehat{\mathcal{C}_1} 
& =  \widehat{L}_1 (\xi,t-t') \widehat{C}_1(\xi, t') +
\widehat{L}_3(\xi, t-t') \widehat{C}_2(\xi,t')    ,    
\end{align*}
implying
\begin{align*}
\widehat{\mathcal{C}_1} 
& = \int \int \widehat{L}_1(\xi,t-t') p_1(\xi,\nu,\eta) \LC \widehat{L}_1(\nu,t') \widehat{h}(\nu) +
\widehat{L}_3( \nu,t')
\widehat{\psi}(\nu) \RC  \\
& \quad \quad \times 
\LC \widehat{L}_1(\xi-\eta-\nu, t') \widehat{h}(\xi-\eta-\nu) + \widehat{L}_3(\xi-\eta-\nu,t') \widehat{\psi}(\xi-\eta-\nu) \RC \\
& \quad \quad \times 
\LC \widehat{L}_2(\eta,t') \widehat{h}(\eta) + \widehat{L}_1(\eta,t')
\widehat{\psi}(\eta) \RC d \eta d \nu  \\
& \quad + \int \int \widehat{L}_3(\xi,t-t') p_{2,1} (\xi,\nu,\eta) \LC \widehat{L}_2(\nu,t') \widehat{h}(\nu) +
\widehat{L}_1( \nu,t')
\widehat{\psi}(\nu) \RC  \\
& \quad \quad \times 
\LC \widehat{L}_1(\xi-\eta-\nu, t') \widehat{h}(\xi-\eta-\nu) + \widehat{L}_3(\xi-\eta-\nu,t') \widehat{\psi}(\xi-\eta-\nu) \RC  \\
& \quad \quad \times 
\LC \widehat{L}_2(\eta,t') \widehat{h}(\eta) + \widehat{L}_1(\eta,t')
\widehat{\psi}(\eta) \RC d \eta d \nu  \\
& \quad  +  \int \int \widehat{L}_3(\xi,t-t') p_{2,2} (\xi,\nu,\eta) \LC \widehat{L}_1(\nu,t') \widehat{h}(\nu) +
\widehat{L}_3( \nu,t')
\widehat{\psi}(\nu) \RC  \\
& \quad \quad \times 
\LC \widehat{L}_1(\xi-\eta-\nu, t') \widehat{h}(\xi-\eta-\nu) + \widehat{L}_3(\xi-\eta-\nu,t') \widehat{\psi}(\xi-\eta-\nu) \RC \\
& \quad \quad \times 
\LC \widehat{L}_1(\eta,t') \widehat{h}(\eta) + \widehat{L}_3(\eta,t')
\widehat{\psi}(\eta) \RC d \eta d \nu  
\end{align*}
and a similar expression for $\mathcal{C}_2$  
\begin{align*}
\widehat{\mathcal{C}_2} 
& =  \widehat{L}_2 (\xi,t-t') \widehat{C}_1(\xi, t') +
\widehat{L}_1(\xi, t-t') \widehat{C}_2(\xi,t')        ,
\end{align*}
which implies
\begin{align*}
\widehat{\mathcal{C}_2} 
& = \int \int \widehat{L}_2(\xi,t-t') p_1(\xi,\nu,\eta) \LC \widehat{L}_1(\nu,t') \widehat{h}(\nu) +
\widehat{L}_3( \nu,t')
\widehat{\psi}(\nu) \RC  \\
& \quad \quad \times 
\LC \widehat{L}_1(\xi-\eta-\nu, t') \widehat{h}(\xi-\eta-\nu) + \widehat{L}_3(\xi-\eta-\nu,t') \widehat{\psi}(\xi-\eta-\nu) \RC   \\
& \quad \quad \times 
\LC \widehat{L}_2(\eta,t') \widehat{h}(\eta) + \widehat{L}_1(\eta,t')
\widehat{\psi}(\eta) \RC d \eta d \nu  \\
& \quad + \int \int \widehat{L}_1(\xi,t-t') p_{2,1} (\xi,\nu,\eta) \LC \widehat{L}_2(\nu,t') \widehat{h}(\nu) +
\widehat{L}_1( \nu,t')
\widehat{\psi}(\nu) \RC \\
& \quad \quad \times 
 \LC \widehat{L}_1(\xi-\eta-\nu, t') \widehat{h}(\xi-\eta-\nu) + \widehat{L}_3(\xi-\eta-\nu,t') \widehat{\psi}(\xi-\eta-\nu) \RC \\
& \quad \quad \times 
\LC \widehat{L}_2(\eta,t') \widehat{h}(\eta) + \widehat{L}_1(\eta,t')
\widehat{\psi}(\eta) \RC d \eta d \nu  \\
& \quad  +  \int \int \widehat{L}_1(\xi,t-t') p_{2,2} (\xi,\nu,\eta)  \LC \widehat{L}_1(\nu,t') \widehat{h}(\nu) +
\widehat{L}_3( \nu,t')
\widehat{\psi}(\nu) \RC \\
& \quad \quad \times 
\LC \widehat{L}_1(\xi-\eta-\nu, t') \widehat{h}(\xi-\eta-\nu) + \widehat{L}_3(\xi-\eta-\nu,t') \widehat{\psi}(\xi-\eta-\nu) \RC \\
& \quad \quad \times 
\LC \widehat{L}_1(\eta,t') \widehat{h}(\eta) + \widehat{L}_3(\eta,t')
\widehat{\psi}(\eta) \RC d \eta d \nu.
\end{align*}

As before, we may only look at $\mathcal{C}_2$. Setting $h_0 = 0$ and collecting
like terms, we obtain
\begin{align*}
\widehat{\mathcal{C}_2} 
& = \int \int \LC \widehat{L}_2(\xi,t-t') p_1(\xi,\nu,\eta) \widehat{L}_3(\xi-\eta-\nu,t')\widehat{L}_3(\nu,t')\widehat{L}_1(\eta,t') \right. \\
& \quad + \widehat{L}_1(\xi,t-t') p_{2,1} (\xi,\nu,\eta) \widehat{L}_3(\xi-\eta-\nu,t') \widehat{L}_1( \nu,t') \widehat{L}_1( \eta,t') \\
& \quad \left. + \widehat{L}_1(\xi,t-t') p_{2,2} (\xi,\nu,\eta) \widehat{L}_3(\xi-\eta-\nu,t')  \widehat{L}_3(\nu,t')  \widehat{L}_3(\eta,t') \RC \widehat{\psi}(\xi-\eta-\nu)\widehat{\psi}(\nu)\widehat{\psi}(\eta) d \eta d \nu.
\end{align*}
Choosing $0\leq t' \leq t \leq {1\over 100 N^{3/2}}$, we can bound
\begin{equation*}
|\widehat{L}_1(\xi,t-t')|, \quad |\widehat{L}_1( \nu,t')|, \quad |\widehat{L}_1( \eta,t')| \geq {1\over 2}.
\end{equation*}
Moreover, we have
\begin{align*}
\LV \widehat{L}_2 p_1 \widehat{L}_3 \widehat{L}_3 \widehat{L}_1 + \widehat{L}_1 p_{2,1} \widehat{L}_3 \widehat{L}_1 \widehat{L}_1 \RV = \LV \LB \widehat{L}_2(\xi)\widehat{L}_3(\nu) p_1 + \widehat{L}_1(\xi) \widehat{L}_1(\nu) p_{2,1} \RB \widehat{L}_3(\xi - \eta - \nu) \widehat{L}_1(\eta) \RV.
\end{align*}
Since
\begin{align*}
\widehat{L}_2(\xi)\widehat{L}_3(\nu) = \sin\LC \lambda(\xi)(t-t') \RC {|\xi|\over \lambda(\xi)} \cdot \sin\LC \lambda(\nu)t' \RC {\lambda(\nu) \over |\nu|} \ll 1,
\end{align*}
and by Lemma \ref{auxiliarylemmasupport:cubic},
\begin{eqnarray*}
\LV \widehat{L}_2 p_1 \widehat{L}_3 \widehat{L}_3 \widehat{L}_1 + \widehat{L}_1 p_{2,1} \widehat{L}_3 \widehat{L}_1 \widehat{L}_1 \RV  &\sim & N^3 \sin \LC \lambda(\xi -\eta - \nu)t' \RC {\lambda(\xi - \eta - \nu) \over |\xi -\eta - \nu|} \\
&\sim& N^{7/2} \sin\LC \lambda(\xi -\eta - \nu)t' \RC.
\end{eqnarray*}

The third multiplier in $\widehat{\mathcal{C}_2}$ can be estimated in the following way:
\begin{align*}
\LV \widehat{L}_1 \right. & \left. p_{2,2} \widehat{L}_3 \widehat{L}_3 \widehat{L}_3 \RV  \sim N^4\sin\LC \lambda(\xi - \eta -\nu)t' \RC\sin\LC \lambda(\nu)t' \RC\sin\LC \lambda(\eta)t' \RC \cdot {\lambda(\xi - \eta - \nu)\lambda(\nu)\lambda(\eta) \over |\xi - \eta - \nu| |\nu| |\eta|} \\
& \sim N^{11/2} \sin\LC \lambda(\xi - \eta -\nu)t' \RC\sin\LC \lambda(\nu)t' \RC\sin\LC \lambda(\eta)t' \RC \\
& = {N^{11/2} \over 4} \LB \sin\LC \lambda(\xi - \eta - \nu) - \lambda(\nu) + \lambda(\eta) \RC t'   - \sin\LC \lambda(\xi - \eta - \nu) + \lambda(\nu) + \lambda(\eta) \RC t'   \right.   \\
& \quad \quad \quad  \left. + \sin\LC \lambda(\xi - \eta - \nu) + \lambda(\nu) - \lambda(\eta)  \RC t' 
-  \sin\LC \lambda(\xi - \eta - \nu) - \lambda(\nu) - \lambda(\eta) \RC t' \RB .
\end{align*}

Therefore,
\begin{align*}
\sup_{0 \leq t \leq T} \LN \tilde{\mathcal{C}} \RN_{X^s} 
& \geq 6\sup_{0 \leq t \leq T} \LN    \LA \xi \RA^{s + {1 \over 2}} \int^t_0 \widehat{\mathcal{C}_1} dt' \RN_{L^2(E)} + 6 \sup_{0 \leq t \leq T} \LN    \LA \xi \RA^{s} \int^t_0 \widehat{\mathcal{C}_2} dt' \RN_{L^2(E)} \\
& \geq 6 \sup_{0 \leq t \leq T} \LN    \LA \xi \RA^{s} \int^t_0 \widehat{\mathcal{C}_2} dt' \RN_{L^2(E)} \\
& \gtrsim \sup_{0 \leq t \leq T} \LN    \LA \xi \RA^{s} \int^t_0 \int \int \LC\LV \widehat{L}_1 p_{2,2} \widehat{L}_3 \widehat{L}_3 \widehat{L}_3 \RV - \LV \widehat{L}_2 p_1 \widehat{L}_3 \widehat{L}_3 \widehat{L}_1 + \widehat{L}_1 p_{2,1} \widehat{L}_3 \widehat{L}_1 \widehat{L}_1 \RV\RC \right. \\
& \quad \quad \quad \quad \quad \quad \times  \left. \LV \widehat{\psi}\widehat{\psi}\widehat{\psi} \RV d\eta d\nu dt' \RN_{L^2(E)} .
\end{align*}

First, we have
\begin{align*}
\sup_{0 \leq t \leq T} & \LN    \LA \xi \RA^{s} \int^t_0 \int \int  \LV \widehat{L}_2 p_1 \widehat{L}_3 \widehat{L}_3 \widehat{L}_1 + \widehat{L}_1 p_{2,1} \widehat{L}_3 \widehat{L}_1 \widehat{L}_1 \RV \LV \widehat{\psi}\widehat{\psi}\widehat{\psi} \RV d\eta d\nu dt' \RN_{L^2(E)} \\
& \sim \LN N^s \int_0^{{1\over 100N^{3/2}}}\int_I\int_I N^{7/2}N^{-3s-3} \sin\LC \lambda(\xi - \eta - \nu)t' \RC d\eta d\nu dt'  \RN_{L^2(E)} \\
& \sim N^{1/2 - 2s} \LN \int_I \int_I {1 - \cos\LC \lambda(\xi - \eta - \nu)N^{-3/2}/100 \RC \over \lambda(\xi - \eta - \nu)} d\eta d\nu \RN_{L^2(E)} \\
& \sim N^{1/2 - 2s} N^{-3/2} |I|^2 |E|^{1/2} \sim N^{1/2 - 2s} N^{-3/2} N^4\delta^2 N \delta^{1/2} \\
& \sim N^{4 - 2s} \delta^{5/2}.
\end{align*}
Similarly,
\begin{align*}
\sup_{0 \leq t \leq T} & \LN    \LA \xi \RA^{s} \int^t_0 \int \int  \LV \widehat{L}_1 p_{2,2} \widehat{L}_3 \widehat{L}_3 \widehat{L}_3 \RV \LV \widehat{\psi}\widehat{\psi}\widehat{\psi} \RV d\eta d\nu dt' \RN_{L^2(E)} \\
& \sim N^{5/2 - 2s} \LN \int_I \int_I  {1 - \cos\LB\LC \lambda(\xi - \eta - \nu) - \lambda(\nu) + \lambda(\eta) \RC N^{-3/2}/100\RB \over  \lambda(\xi - \eta - \nu) - \lambda(\nu) + \lambda(\eta) } + \cdots \  d\eta d\nu \RN_{L^2(E)} \\
& \sim N^{5/2 - 2s} N^{-3/2} |I|^2 |E|^{1/2} \\
& \sim N^{6 - 2s} \delta^{5/2},
\end{align*}
where we have used the fact that in the support of $\widehat{\psi}(\xi - \eta - \nu) \widehat{\psi}(\eta)\widehat{\psi}(\nu)$,
\begin{align*}
& \LV  \lambda(\xi - \eta - \nu) - \lambda(\nu) + \lambda(\eta) \RV, \quad \LV \lambda(\xi - \eta - \nu) - \lambda(\nu) - \lambda(\eta) \RV, \\
& \LV \lambda(\xi - \eta - \nu) + \lambda(\nu) + \lambda(\eta) \RV,  \quad \LV \lambda(\xi - \eta - \nu) + \lambda(\nu) - \lambda(\eta) \RV \quad \sim N^{3/2}.
\end{align*}
Thus, 
\begin{align}\label{cclowerbound}
\sup_{0 \leq t \leq T} \LN \tilde{\mathcal{C}} \RN_{X^s} \gtrsim N^{6 - 2s} \delta^{5/2}.
\end{align}

Now, we seek to prove bounds on the quadratic terms in the expression
for $\p_\alpha^3 \mathcal{N}$.  In particular, we have
\begin{align*}
\widehat{\tilde{Q}}_1 & = \int_{\R^2} m_1 (\widehat{h}_1 \widehat{\psi}_2 + \widehat{h}_2 \widehat{\psi}_1)\ d\eta, \\
\widehat{\tilde{Q}}_2 & = \int_{\R^2} 2 m_2 \widehat{\psi}_1 \widehat{\psi}_2 \ d\eta,
\end{align*}
where $\tilde{Q}_1$ and $\tilde{Q}_2$ are given in \eqref{3rdcomponents}.

As a result, the contribution to the cubic iteration from the
quadratic terms is
\begin{eqnarray*}
\widehat{ \tilde{\mathcal{Q}}_1 }  & = & \widehat{L}_1
\widehat{\tilde{Q}}_1 + \widehat{L}_3 \widehat{\tilde{Q}}_2 , \\
\widehat{ \tilde{\mathcal{Q}}_2 }  & = & \widehat{L}_2
\widehat{\tilde{Q}}_1 + \widehat{L}_1 \widehat{\tilde{Q}}_2 ,
\end{eqnarray*}
where
\begin{equation*}
 \begin{pmatrix}  \tilde{\mathcal{Q}}_1\\ \tilde{\mathcal{Q}}_2 \end{pmatrix} = e^{(t-t')\mathcal{L}} \begin{pmatrix}  \tilde{Q}_1\\ \tilde{Q}_2 \end{pmatrix}.
\end{equation*}

Let us take $\widehat{ \tilde{\mathcal{Q}}_2 }$ and analyze its size in $H^s$:
\begin{eqnarray*}
\widehat{ \tilde{\mathcal{Q}}_2 } & = & \int_0^t \int_0^s \int \widehat{L}_2 m_1 (
\widehat{L}_1 \widehat{h}_0 + \widehat{L}_3 \widehat{\psi}_0) \widehat{\mathcal{Q}}_2 (t') d \eta
dt'ds \\
&& + \int_0^t \int_0^s \int \widehat{L}_2 m_1 (
\widehat{L}_2 \widehat{h}_0 + \widehat{L}_1 \widehat{\psi}_0) \widehat{\mathcal{Q}}_1 (t') d \eta
dt'ds \\
&& + 2 \int_0^t \int^s_0 \int \widehat{L}_1 m_2 (
\widehat{L}_2 \hat{h}_0 + \widehat{L}_1 \widehat{\psi}_0) \widehat{ \mathcal{Q}}_2 (t') d \eta
dtds,
\end{eqnarray*}
where we have used \eqref{2ndvarN} and \eqref{h2psi2}, and $\mathcal{Q}_j$ is defined in \eqref{defnQ}.
Taking $0\leq t \leq {1\over 100N^a}$ for $a\geq 3/2$, bounding $\widehat{ \tilde{\mathcal{Q}}_2 }$ in $H^s$ gives 
\begin{align*}
\LN \widehat{ \tilde{\mathcal{Q}}_2 } \RN_{H^s(E)} & = \LN \LA \xi \RA^{s} \int_0^t \int_0^s \int \widehat{L}_2 m_1\LC \widehat{L}_3 \widehat{\psi}_0 \widehat{\mathcal{Q}}_2 + \widehat{L}_1 \widehat{\psi}_0 \widehat{\mathcal{Q}}_1 \RC + 2 \widehat{L}_1 m_2 \widehat{L}_1 \widehat{\psi}_0 \widehat{ \mathcal{Q}}_2 \ d \eta
dtds   \RN_{L^2(E)} \\
& \lesssim N^s N^{-2a} N^2 N^2 N^{-3s-3} |I|^2 |E|^{1/2},
\end{align*}
where we have used that
\begin{align*}
& \LV \widehat{L}_2 m_1 \widehat{L}_3 + \widehat{L}_2 m_1 \widehat{L}_1 + \widehat{L}_1 m_2 \widehat{L}_1 \RV \sim N^2 \\
& \LV \widehat{ {\mathcal{Q}}_j } \RV \lesssim \int_I N^2 N^{-2s} \ d\eta \sim N^2N^{-2s} |I|.
\end{align*}
Therefore choosing $a = 3/2$ we see that
\begin{equation*}
\LN \widehat{ \tilde{\mathcal{Q}}_2 } \RN_{H^s(E)} \lesssim N^{6 - 2a -2s} \delta^{5/2} = N^{3 - 2s} \delta^{5/2}.
\end{equation*}
A similar result holds for $\LN \widehat{ \tilde{\mathcal{Q}}_1 } \RN_{H^{s+{1\over2}}(E)}$. Hence it is of lower order to the largest
cubic term.

For $2d$ interface problem we can now finish the 
\begin{proof}[Proof of Theorem \ref{illposednesstheorem}]
We know that
\begin{equation}   \label{supposedupperbound:cubic}
\LN \begin{pmatrix} h_0 \\ \psi_0 \end{pmatrix} \RN_{X^s} = \LN \psi_0
\RN_{H^s} = \LN \LA \xi \RA^s \widehat{\psi}_0 \RN_{L^2} \sim \delta^{\frac12}
\end{equation}
since the support lies on a set of area $\LC N^2 \delta \RC$.  So
\begin{equation}\label{assumeduppergravitycapillary:cubic}
\sup_{0 \leq t \leq 1} \LN \mathcal{C} \RN_{X^s} \lesssim \delta^{3/2}.
\end{equation}
Taking $\delta = N^{-2\epsilon}$, from \eqref{cclowerbound} we have
\begin{align*}
\sup_{0 \leq t \leq T} \LN {\mathcal{C}} \RN_{X^s} \gtrsim N^{6 - 2s} \delta^{5/2} \gg \delta^{3/2},
\end{align*}
which violates \eqref{assumeduppergravitycapillary:cubic} when 
\begin{equation*} 
6 - 2s - 2\epsilon > 0
\end{equation*}
or
\begin{equation*}
s < 3 - {\epsilon}.
\end{equation*}

\end{proof}

\section{$1d$ interface}
\label{app:1d}
We make a few comments on the proof of the regularity threshold in $1d$. 
As in the $2d$ calculation, we restrict $\psi_0$ to an interval at high frequency and set 
\begin{equation}\label{illposeddata:cubic1d}  \begin{split} 
\widehat{h} & = 0 \\
\widehat{\psi} &= N^{-(s+\frac12)} {1}_{ N \leq \xi  \leq 2N }. 
\end{split} \end{equation}
As in the $2d$ case, the quadratic iteration provides a regularity threshold in $X^s = H^{s+ \frac12} \otimes H^s$ for $s < 3/2$, which 
is borderline Lipschitz for $h$ in one dimension as required to make sense of the expansion in \cite{NR1}.  Iterating further we find, as in the $2d$ calculation that 
\begin{align}\label{thirditeratedecomposition1d}
\mathcal{C} & = \int^t_0 e^{(t-t')\mathcal{L}} \p_\alpha^3 \mathcal{N}(0, t')dt' = 6 \int^t_0 e^{(t-t')\mathcal{L}} \begin{pmatrix}  \tilde{Q}_1\\ \tilde{Q}_2 \end{pmatrix}(t')dt' + \int^t_0 e^{(t-t')\mathcal{L}} \begin{pmatrix}  C_1\\ C_2 \end{pmatrix}(t')dt' \\
& \equiv 6\tilde{\mathcal{Q}} + \tilde{\mathcal{C}}, \notag
\end{align}
where
\begin{equation} \label{3rdcomponents1d} \begin{split}
\tilde{Q}_1 = & - \p_x\LC h_1 \p_x \psi_2 \RC - |\p_x| \LC h_1 |\p_x|
\psi_2 \RC -  \p_x \LC h_2 \p_x \psi_1 \RC - |\p_x| \LC h_2|\p_x| \psi_1 \RC, \\
\tilde{Q}_2 = &\ |\p_x|\psi_1 |\p_x| \psi_2 - \p_x \psi_1  \p_x \psi_2, \\
C_1 = & {3} \LB \p_{xx} \LC h_1^2  |\p_x| \psi_1 \RC + |\p_x| \LC h_1^2 \p_{xx} \psi_1 \RC 
+ 2 |\p_x|( h_1|\p_x| (h_1|\p_x| \psi_1)) \RB, \\
C_2 = & \ -6 \LB   h_1 \p_{xx} \psi_1 + |\p_x|(h_1 |\p_x| \psi_1)   \RB  |\p_x| \psi_1 
- 9 \tau \LB  \p_{xx} h_1 (\p_x h_1)^2  \RB  . 
\end{split}\end{equation}

As in the $2d$ case,  the terms arising from the second-order iteration, $\tilde{Q}_j$, are lower order, and we can restrict our attention 
to the terms arising as cubic interactions of the linear propagators $h_1, \psi_1$.  The cubic order pseudodifferential operator multipliers are simpler in the $1d$ case, namely we have
\begin{align*}
p_1 & = -3 |\xi| |\eta| \LC |\xi| + |\eta| - 2|\xi - \nu| \RC, \\
p_{2,1} & = 6 |\eta| \LB |\nu|^2 - |\xi - \eta| |\nu| \RB , \\
p_{2,2}&  = - 9 \tau (\xi - \eta -\nu) \nu \eta^2, 
\end{align*}
and  $\widehat\psi(\xi - \eta - \nu) \widehat\psi(\eta) \widehat\psi(\nu)$ has support on the set $E = \{3N \leq \xi \leq 6N\}$.  
As in Lemma~\ref{auxiliarylemmasupport:cubic} 
\begin{align*}
p_1 & \lesssim N^3, \\
p_{2,1} & \lesssim N^3 ,\\
p_{2,2} & \sim N^4.
\end{align*}
We again concentrate on $\mathcal{C}_2$ which produces the largest term of the cubic expansion. Again we have
\begin{align*}
\widehat{\mathcal{C}_2} 
& = \int \int \LC \widehat{L}_2(\xi,t-t') p_1(\xi,\nu,\eta) \widehat{L}_3(\xi-\eta-\nu,t')\widehat{L}_3(\nu,t')\widehat{L}_1(\eta,t') \right. \\
& \quad + \widehat{L}_1(\xi,t-t') p_{2,1} (\xi,\nu,\eta) \widehat{L}_3(\xi-\eta-\nu,t') \widehat{L}_1( \nu,t') \widehat{L}_1( \eta,t') \\
& \quad \left. + \widehat{L}_1(\xi,t-t') p_{2,2} (\xi,\nu,\eta) \widehat{L}_3(\xi-\eta-\nu,t')  \widehat{L}_3(\nu,t')  \widehat{L}_3(\eta,t') \RC \widehat{\psi}(\xi-\eta-\nu)\widehat{\psi}(\nu)\widehat{\psi}(\eta) d \eta d \nu.
\end{align*}
We study the third multiplier in $\widehat{\mathcal{C}_2}$,
\begin{eqnarray*}
\LV \widehat{L}_1 p_{2,2} \widehat{L}_3 \widehat{L}_3 \widehat{L}_3 \RV,
\end{eqnarray*}
which is again bounded by
\begin{align*}
  {N^{11/2} \over 4} & \LB \sin\LC \lambda(\xi - \eta - \nu) - \lambda(\nu) + \lambda(\eta) \RC t' -  \sin\LC \lambda(\xi - \eta - \nu) - \lambda(\nu) - \lambda(\eta) \RC t' \right.\\
& \quad \quad \quad \left. - \sin\LC \lambda(\xi - \eta - \nu) + \lambda(\nu) + \lambda(\eta) \RC t' + \sin\LC \lambda(\xi - \eta - \nu) + \lambda(\nu) - \lambda(\eta) \RC t' \RB.
\end{align*}
Thus, if $I = \{ N \leq \xi \leq 2 N\}$, then
\begin{align*}
\sup_{0 \leq t \leq T} & \LN    \LA \xi \RA^{s} \int^t_0 \int \int  \LV \widehat{L}_1 p_{2,2} \widehat{L}_3 \widehat{L}_3 \widehat{L}_3 \RV \LV \widehat{\psi}\widehat{\psi}\widehat{\psi} \RV d\eta d\nu dt' \RN_{L^2(E)} \\
& \sim N^{4 - 2s} \LN \int_I \int_I  {1 - \cos\LB\LC \lambda(\xi - \eta - \nu) - \lambda(\nu) + \lambda(\eta) \RC N^{-3/2}/100\RB \over  \lambda(\xi - \eta - \nu) - \lambda(\nu) + \lambda(\eta) } + \cdots \  d\eta d\nu \RN_{L^2(E)} \\
& \sim N^{4 - 2s} N^{-3/2} |I|^2 |E|^{1/2} \\
& \sim N^{5 - 2s} ,
\end{align*}
where we have used the fact that in the support of $\widehat{\psi}(\xi - \eta - \nu) \widehat{\psi}(\eta)\widehat{\psi}(\nu)$,
\begin{align*}
& \LV  \lambda(\xi - \eta - \nu) - \lambda(\nu) + \lambda(\eta) \RV, \quad \LV \lambda(\xi - \eta - \nu) - \lambda(\nu) - \lambda(\eta) \RV, \\
& \LV \lambda(\xi - \eta - \nu) + \lambda(\nu) + \lambda(\eta) \RV,  \quad \LV \lambda(\xi - \eta - \nu) + \lambda(\nu) - \lambda(\eta) \RV \quad \sim N^{3/2}.
\end{align*}
As a result, 
\begin{align}\label{cclowerbound1d}
\sup_{0 \leq t \leq T} \LN \tilde{\mathcal{C}} \RN_{X^s} \gtrsim N^{5 - 2s} .
\end{align}

We know that
\begin{equation}   \label{supposedupperbound:cubic1d}
\LN \begin{pmatrix} h_0 \\ \psi_0 \end{pmatrix} \RN_{X^s} = \LN \psi_0
\RN_{H^s} = \LN \LA \xi \RA^s \widehat{\psi}_0 \RN_{L^2} \sim 1
\end{equation}
since the support lies on an interval of length $N$.  So
\begin{equation}\label{assumeduppergravitycapillary:cubic1d}
\sup_{0 \leq t \leq 1} \LN \mathcal{C} \RN_{X^s} \lesssim 1.
\end{equation}
Then from \eqref{cclowerbound1d} we have
\begin{align*}
\sup_{0 \leq t \leq T} \LN {\mathcal{C}} \RN_{X^s} \gtrsim N^{5 - 2s}  \gg 1,
\end{align*}
which violates \eqref{assumeduppergravitycapillary:cubic1d} when $5 - 2s  > 0$
or
\begin{equation*}
s < 5/2.
\end{equation*}

\section{Pure gravity problem}
\label{sec:puregravity}

As mentioned in the introduction we visit here the flow map regularity threshold for the pure gravity problem.  In this case, we gain nothing by going past the regularity threshold at the level of the quadratic terms, as the cubic interactions due to surface tension are responsible for the improved results above.  As a result, we proceed as in Section \ref{S:ill}.  In the pure gravity case, we recall from \eqref{lambdag} that we have $\lambda_{g} (r) = r^{1\over2}$.  Hence,
when $r \gg 1$, 
\begin{equation} \label{gratio}
{\lambda_{g}(r) \over |r| } \approx r^{-{1\over2}},
\end{equation}
and we choose 
\begin{equation} \label{gtimebound}
0 \leq t \leq  {1\over 100}{ 1\over N^a} \ \hbox{ with } \  a \geq {1 \over 2}. 
\end{equation}
in order for  $\cos(\lambda_{g}(r) t') \geq {1\over 2}$ for $r \gg 1$ and all $0 \leq t' \leq t$.   
Following as above we can show that  

\begin{prop} \label{lowerboundpropgravity}
Given $\widehat{L}_1,\widehat{L}_2,\widehat{L}_3$ defined using $\lambda_g$ in \eqref{lambdag}, on the support of $\widehat{\psi}(\xi-\eta)\widehat{\psi}(\eta)$ and for $0 \leq t' \leq t \leq {1\over 100}{ 1\over N^a}$ with $a\geq1/2$ we have the following bounds: 
\begin{eqnarray*}
\LV \widehat{L}_3(\xi, t-t') m_1 \widehat{L}_2(\xi - \eta,t') \widehat{L}_1(\eta,t') + \widehat{L}_1(\xi,t-t') m_2 \widehat{L}_1(\xi - \eta,t') \widehat{L}_1(\eta,t') \RV \sim N^2.  
\end{eqnarray*}
\end{prop}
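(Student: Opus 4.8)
The plan is to follow the proof of Proposition~\ref{lowerboundprop} closely, now with $\lambda = \lambda_g$ and the longer time window \eqref{gtimebound} (so $a \ge 1/2$). The two summands in the statement are the integrand multipliers of the pieces $C$ and $D$ of $\widehat{\mathcal{Q}_2}$ introduced in Section~\ref{S:ill}; I would keep only these because, with $a = 1/2$, the ratio $|\xi|/\lambda_g(|\xi|) \sim N^{1/2}$ is no longer small, so the $\widehat{\mathcal{Q}_1}$-multiplier carrying the prefactor $\widehat{L}_2(\xi,t-t')$ against $m_2 \sim N^2$ is of size $\sim N^{5/2}$ rather than $N^2$. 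As in the gravity--capillary case, $D$ will dominate and be $\sim N^2$, while $C$ carries an extra factor $\delta^2$ from the angular factor in $m_1$.

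First I would record the smallness of the phases. By Lemma~\ref{auxiliarylemmasupport}, on the support of $\widehat\psi(\xi-\eta)\widehat\psi(\eta)$ one has $|\xi| \sim |\xi-\eta| \sim |\eta| \sim N$, so $\lambda_g$ of each is $\sim N^{1/2}$; together with $0 \le t' \le t \le \tfrac{1}{100}N^{-a}$ and $a \ge 1/2$ this yields
\[
\lambda_g(\xi)(t-t'),\quad \lambda_g(\xi-\eta)\,t',\quad \lambda_g(\eta)\,t' \ \lesssim \ N^{1/2-a} \ \le \ \tfrac{1}{50}.
\]
Hence the cosines of all three are $\ge 1/2$, so $\widehat{L}_1(\xi,t-t'),\ \widehat{L}_1(\xi-\eta,t'),\ \widehat{L}_1(\eta,t') \in [\tfrac12,1]$.

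For the second summand $\widehat{L}_1(\xi,t-t')\,m_2(\xi,\eta)\,\widehat{L}_1(\xi-\eta,t')\,\widehat{L}_1(\eta,t')$, the product of the three $\widehat{L}_1$ factors lies in $[\tfrac18,1]$, so by \eqref{m2same} its modulus is $\sim N^2$. For the first summand $\widehat{L}_3(\xi,t-t')\,m_1(\xi,\eta)\,\widehat{L}_2(\xi-\eta,t')\,\widehat{L}_1(\eta,t')$ I would use the crude bounds
\[
|\widehat{L}_3(\xi,t-t')| \le \frac{\lambda_g(|\xi|)}{|\xi|} \lesssim N^{-1/2}, \qquad |\widehat{L}_2(\xi-\eta,t')| \le \frac{|\xi-\eta|}{\lambda_g(|\xi-\eta|)} \lesssim N^{1/2},
\]
together with $|\widehat{L}_1(\eta,t')| \le 1$ and $|m_1(\xi,\eta)| \lesssim N^2\delta^2$ from \eqref{m1same}; the two ratios multiply to $O(1)$ because $|\xi| \sim |\xi-\eta| \sim N$, so the first summand is $\lesssim N^2\delta^2$. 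Fixing $\delta$ small, uniformly in $N$, so that $N^2\delta^2$ is below half the lower bound for the second summand, the triangle inequality gives that the sum is $\sim N^2$.

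Everything here is bookkeeping with the explicit propagators $\widehat{L}_1, \widehat{L}_2, \widehat{L}_3$, and I do not expect a serious obstacle. The one point that genuinely differs from the $\lambda_{gc}$ analysis, and which dictates the precise form of the statement, is that with the larger window $a = 1/2$ the factor $|\xi|/\lambda_g(|\xi|)$ has size $\sim N^{1/2}$ rather than being small, so it is only the $\widehat{\mathcal{Q}_2}$-multipliers ($C$ and $D$) that remain pointwise of size $N^2$ and may be tracked in this lemma; the smallness of the $C$-contribution still rests entirely on the null factor $\cos\theta_1 - 1 = O(\delta^2)$ in $m_1$ already isolated in Lemma~\ref{auxiliarylemmasupport}.
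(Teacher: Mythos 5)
Your proposal is correct and follows essentially the same route as the paper: the phase smallness on $[0,\tfrac{1}{100}N^{-a}]$ with $a\ge 1/2$ forces the three $\widehat{L}_1$ factors to lie in $[\tfrac12,1]$, so the $m_2$-term is $\sim N^2$, while the $m_1$-term is negligible. The only cosmetic difference is that the paper bounds the $m_1$-term using $|\sin x|\le |x|$ to extract the extra factor $\lambda^2(\xi)(t-t')t'\le (1/50)^2$ (so no additional smallness of $\delta$ is invoked), whereas you rely solely on the $\delta^2$ from $m_1$; both give the same conclusion.
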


\begin{proof}
Note, this statement is exactly analogous to that of Proposition \ref{lowerboundprop}.  The proof will follow quite similarly, except that we have now
$$\lambda(\xi)(t-t'), \lambda(\xi)t', \lambda(\xi-\eta)t', \lambda(\eta)t' \leq {1\over50}N^{{1\over2}-a}\leq {1\over50}.$$
Therefore,
\[
\widehat{L}_1(\xi,t-t'), \widehat{L}_1(\xi - \eta,t'), \widehat{L}_1(\eta,t')\geq {1\over2}
\]
and hence
\[
\LV \widehat{L}_1(\xi,t-t') m_2 \widehat{L}_1(\xi - \eta,t') \widehat{L}_1(\eta,t') \RV \geq {1\over 16} |\xi - \eta| |\eta|.
\]

On the other hand,
\begin{equation}\label{L3m1L2L1:grav}
\begin{split}
\LV \widehat{L}_3 m_1 \widehat{L}_2 \widehat{L}_1 \RV &\leq 2\LV \sin\LC \lambda(\xi)(t-t') \RC \RV {\lambda(\xi)  \over |\xi|}  |\xi| |\eta| \LC {\delta^2\over2} \RC \LV {\sin \LC\lambda(\xi-\eta)t'\RC \over \lambda(\xi-\eta)} \RV |\xi -\eta|\\
& = \delta^2|\xi-\eta||\eta| \lambda^2(\xi)(t-t')t' \LV {\sin \LC\lambda(\xi)(t-t')\RC \over \lambda(\xi)(t-t')} \RV \LV {\sin \LC\lambda(\xi-\eta)t'\RC \over \lambda(\xi-\eta)t'} \RV\\
&\leq \delta^2|\xi-\eta||\eta| \lambda^2(\xi)(t-t')t' \leq \delta^2 \LC {1\over50} \RC^2 |\xi-\eta||\eta|.
\end{split}
\end{equation}
So we get the claimed lower bound.

\end{proof}
\begin{rem}
Notice that in the above proposition we do not include the other two multipliers $\widehat{L}_2(\xi,t-t') m_2(\xi,\eta)  \widehat{L}_1(\xi-\eta, t') \widehat{L}_1 (\eta,t')$ and $\widehat{L}_1(\xi,t-t') m_1(\xi,\eta)  \widehat{L}_2(\xi-\eta, t') \widehat{L}_1 (\eta,t')$ like in Proposition \ref{lowerboundprop}. This is because in obtaining a lower bound of $\sup_{0\leq t\leq T} \|\mathcal{Q}\|_{Y^s}$, we only need to estimate $\| \sup_{0\leq t\leq T}\int^t_0 \mathcal{Q}_2 dt' \|_{H^s}$, which involves only two multipliers given in Proposition \ref{lowerboundpropgravity}. The estimates for the other two multipliers $\widehat{L}_2 m_2  \widehat{L}_1 \widehat{L}_1$ and $\widehat{L}_1 m_1  \widehat{L}_2 \widehat{L}_1$ are used at the third iterates. However in the gravity case, to obtain similar estimates for $\widehat{L}_2 m_2  \widehat{L}_1 \widehat{L}_1$ and $\widehat{L}_1 m_1  \widehat{L}_2 \widehat{L}_1$, one would need to require $a\geq 1$, which lowers the regularity threshold $s$. Therefore going up to the third iterates for the gravity waves does not help to improve the threshold exponent. 
\end{rem}

For the pure gravity case we follow the same argument with a few changes.
Assume $(h,\psi)$ are
wellposed in $Y^s \equiv H^{s-{1\over 2}} \otimes H^s$ for a range of $s\in \R$. 
Then 
\begin{align*}
\sup_{0\leq t \leq T}\LN \mathcal{Q} \RN_{Y^s} \leq  \LN \begin{pmatrix} h_0 \\ \psi_0 \end{pmatrix} \RN_{Y^s}^2.
\end{align*} 
Assuming initial data as in \eqref{illposeddata} and the resulting bound \eqref{supposedupperbound},  then
\[
\sup_{0 \leq t \leq 1} \LN \mathcal{Q} \RN_{Y^s} \lesssim \delta.
\]
Recall that $0 \leq t \leq {1\over 100} {1 \over N^a}$ for $a \geq {1/2}$.  We again consider  the high frequency sector  $E$ as before.   From Proposition~\ref{lowerboundpropgravity} we have again
\begin{align*}
\sup_{0 \leq t \leq T} \LN \mathcal{Q} \RN_{Y^s} 
& \geq 2\sup_{0 \leq t \leq T} \LN    \LA \xi \RA^{s - {1 \over 2}} \int^t_0 \widehat{\mathcal{Q}_1} dt' \RN_{L^2(E)} + 2\sup_{0 \leq t \leq T} \LN    \LA \xi \RA^{s} \int^t_0 \widehat{\mathcal{Q}_2} dt' \RN_{L^2(E)} \\
& \gtrsim N^{3-a -s} \delta^{3/2};
\end{align*}
therefore,  
\[
\sup_{0 \leq t \leq T} \LN \mathcal{Q} \RN_{Y^s}  \gg N^{0+}\delta
\]
 violates \eqref{supposedupperbound} 
so long as  
\[
0 < 3 - a  - \epsilon - s \leq 3 - 1/2 - \epsilon - s = 5/2  - \epsilon - s
\]
or 
\[
s < {5\over2} .
\]



\end{document}